\documentclass[12pt,reqno]{amsart}

\usepackage{amssymb,amsmath,graphicx,amsfonts,euscript}
\usepackage{color}

\setlength{\textheight}{9in} \setlength{\textwidth}{6.2in}
\setlength{\oddsidemargin}{0.2in} \setlength{\evensidemargin}{0.2in}
\setlength{\parindent}{0.2in}
\setlength{\topmargin}{0.1in} \setcounter{section}{0}
\setcounter{figure}{0} \setcounter{equation}{0}

\newtheorem{thm}{Theorem}[section]

\newtheorem{prop}[thm]{Proposition}
\newtheorem{define}[thm]{Definition}
\newtheorem{rem}[thm]{Remark}

\newtheorem{lemma}[thm]{Lemma}

\newcommand\R{\mathbb {R}}
\newcommand\Z{\mathbb {Z}}

\voffset=-0.2in
\numberwithin{equation}{section}
\subjclass[2010]{35A09, 76D03, 86A10}
\keywords{Tropical climate model, global well-posedness, commutator estimate}
\begin{document}
\title[Global solutions to a tropical climate model]{Global small solutions to a tropical climate model without thermal diffusion}

\author[Renhui Wan]{ Renhui Wan$^{\ast}$}

\address{$^\ast$ Department of Mathematics,
Zhejiang University,
Hanzhou 310027, China}

\email{rhwanmath@zju.edu.cn, rhwanmath@163.com, 21235002@zju.edu.cn}

\vskip .2in
\begin{abstract}
We obtain the global well-posedness of classical solutions to a tropical climate model derived by Feireisl-Majda-Pauluis in \cite{FMP} with only the dissipation of the first
baroclinic model of the velocity ($-\eta \Delta v$) under small initial data. The main difficulty is the absence of thermal diffusion.
To overcome it,  we exploit the structure of the equations coming from the coupled terms, dissipation term and damp term.  Then  we find the hidden thermal diffusion.  In addition, based on the Littlewood-Palay theory, we establish a generalized commutator estimate, which  may be applied to other partial differential equations.

\end{abstract}

\maketitle

\vskip .2in
\section{Introduction}
\label{Introduction}
The purpose of this article is to study the cauchy problem for a tropical model without thermal diffusion:
\begin{equation} \label{TCM}
\left\{
\begin{aligned}
& \partial_t u + u\cdot\nabla u +\alpha u +\nabla p = -{\rm div}(v\otimes v),  \\
& \partial_t v + u\cdot\nabla v + v\cdot\nabla u+\alpha v-\eta\Delta v =\nabla \theta,\\
& \partial_t \theta+u\cdot\nabla \theta={\rm div}v,\\
& {\rm div} u=0,\\
& (u(0,x),v(0,x),\theta(0,x))=(u_0(x),v_0(x),\theta_0(x)),
\end{aligned}
\right.
\end{equation}
here $(t,x)\in \mathbb{R}^{+}\times \mathbb{R}^2$,  $u=(u^1,u^2),$ $v=(v^1,v^2)$ stand for the barotropic mode and the first baroclinic mode
of the vector velocity, respectively, $p,\theta$ represent the scalar pressure, scalar temperature, respectively, $\alpha$ and $\eta$
 are the nonnegative parameters.
 %We call (\ref{TCM}) with $\eta=0$ the ideal model throughout this paper.
\vskip .1in
By performing a Galerkin truncation to the hydrostatic Boussinesq equations, Feireisl-Majda-Pauluis in  \cite{FMP} derived a version of   (\ref{TCM}) without any Laplacian terms,
of which the first  baroclinic mode  had been originally used in some studies of tropical atmosphere in \cite{Gill} and \cite{Matsuno}.
For more details on the first  baroclinic mode, we refer to the section 1 and section 2 in \cite{FMP} and references therein.

\vskip .1in
Recently, for the version of (\ref{TCM}) with $-\Delta u$, $\alpha=0$ and $\eta=1$, Li-Titi in \cite{LT} obtained the global well-posedness without any small assumptions
of initial data. The difficulty of their work  is that  energy method can not be applied to  get the gradient estimate of $(u,v,\theta)$ directly due to the absence of thermal diffusion. However, by introducing  a unknown $w$,
$$w:\stackrel{def}{=} v-\nabla(-\Delta)^{-1}\theta,$$
they overcome this difficulty and  improved the regularity of $u$, and then obtained the gradient estimate of $(u,v,\theta)$. It is clear  that the Laplacian term,
$-\Delta u$, plays a key role, while  the method  does not appear to be able to extend to the case without $-\Delta u$ even if the initial data is small.

\vskip .1in
However, for (\ref{TCM}) with small data, we can get the global well-posedness which is the main result of this paper.  The details can be given as follows:
\begin{thm}\label{main}
Let $\alpha>0$ and $\eta>0$. Consider (\ref{TCM}) with initial data $(u_0,v_0, \theta_0)\in H^s(\R^2)$, $s>2,$ and ${\rm div}u_0=0$. There exists a small
constant $\epsilon=\epsilon(\alpha,\eta)>0$ such that if
\begin{equation}\label{small}
\|u_0\|_{H^s(\R^2)}+\|v_0\|_{H^s(\R^2)}+\|\theta_0\|_{H^s(\R^2)}<\epsilon,
\end{equation}
then (\ref{TCM}) admits a unique global  solution $(u,v,\theta)$ satisfying
$$(u,v,\theta)\in C([0,\infty);H^s(\R^2)),\ \nabla v\in L^2([0,\infty);H^{s}(\R^2)).$$
\end{thm}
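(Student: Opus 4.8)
The plan is to combine a standard local existence theory with a global-in-time a priori estimate under the smallness hypothesis, closed by a continuation argument. For local well-posedness of $(u,v,\theta)$ in $C([0,T];H^s)$ with $s>2$ I would use a Friedrichs mollification (or successive-approximation) scheme together with the energy method. The only subtlety is that $\theta$ carries no smoothing, but since $s>2$ the embedding $H^s(\R^2)\hookrightarrow W^{1,\infty}$ keeps $\na u,\na v,\na\theta$ bounded, so the transport term $u\cdot\na\theta$ is handled by a commutator estimate, and a local solution together with a continuation criterion (controlling $\sup_{[0,T]}\|(u,v,\theta)\|_{H^s}$) follows. Everything then reduces to propagating smallness of the $H^s$ norm for all time.

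The heart is the a priori estimate, where the absence of thermal diffusion must be compensated. Applying $\Lambda^s=(-\Dd)^{s/2}$ to each equation and pairing with $\Lambda^s u,\Lambda^s v,\Lambda^s\theta$, the linear coupling terms $\na\theta$ and ${\rm div}\,v$ cancel against each other, while the quadratic coupling $v\cdot\na u$ and $-{\rm div}(v\otimes v)$ combine so that only commutator terms survive at leading order, giving
\[
\tfrac12\ddt\|(u,v,\theta)\|_{H^s}^2+\alpha\|u\|_{H^s}^2+\alpha\|v\|_{H^s}^2+\eta\|\na v\|_{H^s}^2=\mathcal N,
\]
where $\mathcal N$ collects commutators such as $\langle[\Lambda^s,u\cdot\na]\theta,\Lambda^s\theta\rangle$; this is precisely where the generalized commutator estimate is needed, so as not to lose a derivative on $\theta$. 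The obstruction is that this identity provides no dissipation for $\theta$, so the $\theta$-part of $\mathcal N$ cannot be absorbed. To expose the hidden thermal diffusion I would exploit the interplay between $\partial_t\theta={\rm div}\,v-u\cdot\na\theta$ and the $v$-equation: a direct computation gives
\[
\ddt\langle\Lambda^{s-1}\na\theta,\Lambda^{s-1}v\rangle=\|\na\theta\|_{\dot H^{s-1}}^2-\|{\rm div}\,v\|_{\dot H^{s-1}}^2+(\text{lower order}),
\]
so that subtracting a small multiple of this cross term from the energy manufactures a genuine dissipation $\delta\|\na\theta\|_{H^{s-1}}^2$ at the cost of $\delta\|{\rm div}\,v\|_{H^{s-1}}^2$, which is absorbed into $\eta\|\na v\|_{H^s}^2$.

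Setting $\mathcal E=\tfrac12\|(u,v,\theta)\|_{H^s}^2-\delta\langle\Lambda^{s-1}\na\theta,\Lambda^{s-1}v\rangle$, for $\delta$ small one has $\mathcal E\sim\|(u,v,\theta)\|_{H^s}^2$, since the cross term is bounded by $\|\theta\|_{H^s}\|v\|_{H^{s-1}}$, and combining the two computations yields a differential inequality
\[
\ddt\mathcal E+c\,\mathcal D\le C\,\mathcal E^{1/2}\mathcal D,\qquad \mathcal D:=\|u\|_{H^s}^2+\|v\|_{H^s}^2+\|\na v\|_{H^s}^2+\|\na\theta\|_{H^{s-1}}^2,
\]
the nonlinear right-hand side being estimated by the commutator and product estimates. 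The final step is a continuity/bootstrap argument: if $\mathcal E(0)\lesssim\epsilon^2$ is small enough that $C\mathcal E^{1/2}\le c/2$ on the maximal interval, then $\ddt\mathcal E+\tfrac{c}{2}\mathcal D\le0$, so $\mathcal E(t)$ remains small and $\int_0^\infty\|\na v\|_{H^s}^2\,dt<\infty$, and the continuation criterion upgrades the local solution to a global one with the stated regularity. I expect the main obstacle to be the closure of $\mathcal N$: because the $\theta$-dissipation lives one derivative below the $v$-dissipation, the top-order transport term $\langle[\Lambda^s,u\cdot\na]\theta,\Lambda^s\theta\rangle$ and the coupled terms carrying $\na\theta$ must be arranged so that every factor is absorbed into $\mathcal E^{1/2}$ and $\mathcal D^{1/2}$, and it is here that the generalized commutator estimate does the essential work.
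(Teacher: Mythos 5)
Your proposal is correct, and it reaches the global bound by a genuinely different route than the paper. The paper extracts the hidden thermal dissipation by introducing the good unknown $\Omega=\mathcal{R}v+\eta\Lambda\theta$, $\mathcal{R}=\Lambda^{-1}{\rm div}$, which satisfies the damped transport equation (\ref{3.5}); energy estimates for $\Omega$ in $L^2$ and $\dot H^{s-1}$, together with the reconstruction $\eta\Lambda\theta=\Omega-\mathcal{R}v$, control $\nabla\theta$ in $L^2_tH^{s-1}$, and the proof closes by adding a large multiple $M$ of the $H^s$ energy inequality to the $\Omega$ inequality and bootstrapping, exactly as in your last step. Your compensating cross term is the same mechanism in hypocoercivity form: expanding the paper's functional gives $\|\Omega\|_{\dot H^{s-1}}^2=\|\mathcal{R}v\|_{\dot H^{s-1}}^2-2\eta\langle\Lambda^{s-1}v,\Lambda^{s-1}\nabla\theta\rangle+\eta^2\|\nabla\theta\|_{\dot H^{s-1}}^2$, so it contains your cross term with the same sign, and both devices convert the skew coupling $\nabla\theta$ versus ${\rm div}\,v$ plus the $v$-dissipation into $\theta$-dissipation. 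The implementations buy different things, though. The $\Omega$ equation keeps the transport structure (the $u\cdot\nabla\Omega$ pairing vanishes), but generates the commutators $[\mathcal{R},u\cdot\nabla]v$ and, crucially, $\eta[\Lambda,u\cdot\nabla]\theta$ measured in $\dot H^{s-1}$; it is precisely for the latter that the paper proves its generalized commutator estimate (\ref{CE}) (Proposition \ref{pp}, applied with $d=2$, $s=1$, $\sigma=s-1$, $p=r=2$). Differentiating the cross term directly, as you do, produces only standard product estimates --- e.g. $\langle\Lambda^{s-1}\nabla(u\cdot\nabla\theta),\Lambda^{s-1}v\rangle=-\langle\Lambda^{s-1}(u\cdot\nabla\theta),\Lambda^{s-1}{\rm div}\,v\rangle$ after integration by parts, followed by the $H^{s-1}$ algebra property --- so your route bypasses Proposition \ref{pp} entirely. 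This also means your closing remark is slightly miscalibrated: in your scheme the top-order term $\langle[\Lambda^s,u\cdot\nabla]\theta,\Lambda^s\theta\rangle$ is handled by the classical Kato--Ponce estimate (\ref{kpv}); the generalized estimate is only forced by the paper's $\Omega$ formulation.

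Two small repairs are needed. First, your displayed identity produces only the homogeneous dissipation $\delta\|\nabla\theta\|_{\dot H^{s-1}}^2=\delta\|\theta\|_{\dot H^s}^2$; to justify the full $\|\nabla\theta\|_{H^{s-1}}^2$ you place in $\mathcal{D}$, also subtract the $L^2$-level cross term $\delta\langle\nabla\theta,v\rangle$, whose cost $\delta\|{\rm div}\,v\|_{L^2}^2$ is again absorbed by $\eta\|\nabla v\|_{H^s}^2$. Second, among your ``lower order'' terms sit $\eta\langle\Lambda^{s-1}\nabla\theta,\Lambda^{s-1}\Delta v\rangle$ and $-\alpha\langle\Lambda^{s-1}\nabla\theta,\Lambda^{s-1}v\rangle$, whose absorption forces $\delta=\delta(\alpha,\eta)$ (consistent with $\epsilon=\epsilon(\alpha,\eta)$), and the coupling $v\,{\rm div}\,v$ in the $u$-equation leaves quartic contributions such as $\|v\|_{L^\infty}^2\|u\|_{\dot H^s}^2$, so the differential inequality should read $\frac{d}{dt}\mathcal{E}+c\,\mathcal{D}\le C\bigl(\mathcal{E}^{1/2}+\mathcal{E}\bigr)\mathcal{D}$ --- harmless for the bootstrap, and of the same shape as the paper's (\ref{3.12}).
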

\begin{rem}\label{r1}
($i$) If we neglect the coupled terms $\nabla \theta$ and ${\rm div}v$, the key part of (\ref{TCM}) is the first two equations, which  are very similar to the 2D MHD equations. To the best of our knowledge for that without velocity dissipation and small data, the global regularity result  is empty ( see, e.g., \cite{Wu1},\cite{Wu2},\cite{F5} and references therein), from which, it seems very difficult to drop the condition (\ref{small}).

($ii$) Motivated by the recent works on the local well-posedness  for the non-resistive MHD equations (i.e., only with $-\Delta u$) with low regular initial data (see, e.g., \cite{Chemin},\cite{Fefferman}
and \cite{Wan}),  we  expect that similar result holds for  the version of (\ref{TCM}) with $-\Delta u$,
 $\alpha=0$ and $\eta=0$.
%($ii$) It is interesting to consider the similar problem for the version of (\ref{TCM}) with fractional Laplcian term, $\eta(-\Delta)^\beta v$, where $0<\beta<1,\eta>0$.
\end{rem}

\vskip .2in
Now, let us explain the difficulty and our idea. By the standard energy method, we can show  that
\begin{equation}\label{1}
\begin{aligned}
\frac{1}{2}\frac{d}{dt}&\|(u,v,\theta)\|_{H^s(\R^2)}^2+\alpha\|(u,v)\|_{H^s(\R^2)}^2+\frac{\eta}{2}\|\nabla v\|_{H^s(\R^2)}^2\\
&\le C(\eta)\left\{\|(\nabla u,\nabla v,\nabla \theta)\|_{L^\infty(\R^2)}+\|v\|_{L^\infty(\R^2)}^2\right\}\|(u,v,\theta)\|_{\dot{H}^s(\R^2)}^2,
\end{aligned}
\end{equation}
from which, we can  see  that (\ref{1}) does not be closed under small initial data unless  some norm of $\theta$ such as $\|\theta\|_{H^s(\R^2)}$ occurs on the left hand side of (\ref{1}).
\vskip .1in
Our proof is exploiting the structure of (\ref{TCM}). To make our idea clear, we give the details for the key part of linearized (\ref{TCM}):
\begin{equation} \label{linear}
\left\{
\begin{aligned}
& \partial_t v +\alpha v-\eta\Delta v-\nabla \theta =0,\\
& \partial_t \theta-{\rm div}v=0.\\
\end{aligned}
\right.
\end{equation}
Applying the operator $\Lambda^{-1}{\rm div}$ and $\eta\Lambda$ to the first and second equation of (\ref{linear}), respectively,
 then adding the resulting equations, denote
 $$\mathcal{R}:\stackrel{def}{=} \Lambda^{-1}{\rm div},\ \ \Omega:\stackrel{def}{=} \mathcal{R}v+\eta \Lambda \theta,$$
it is easy to  deduce
\begin{equation}\label{2}
\partial_t \Omega+\frac{1}{\eta}\Omega=(\frac{1}{\eta}-\alpha) \mathcal{R}v.
\end{equation}
Multiplying the first equation of (\ref{linear}) by a large enough constant $M$, adding the resulting equation to (\ref{2}) and combining with the $L^2$ bound of  Riesz transform, we can find the hidden thermal diffusion and then overcome this difficulty.
\vskip .1in
Let us complete this section by describing the notations we shall use in this paper.\\
{\bf Notations} For $A$, $B$ two operator, we denote $[A,B]=AB-BA$, the commutator between $A$ and $B$. The uniform constant  $C$, which may be different on different lines,  is independent of the parameters such as $\alpha$ and $\eta$ in (\ref{TCM}), while the constant $C(\cdot)$ means a constant depends on the element(s) in bracket. In some places of this paper,  we may use $L^p$, $\dot{H}^s$ ($H^s$) and $\dot{B}_{p,r}^s$ ($B_{p,r}^s$) to stand for  $L^p(\R^d)$, $\dot{H}^s(\R^d)$ ($H^s(\R^d)$) and $\dot{B}_{p,r}^s(\R^d)$ ($B_{p,r}^s(\R^d)$), respectively.   We shall denote by $(a|b)$ the $L^2$ inner product
of $a$ and $b$, and $(a|b)_{\dot{H}^s}$ stands for the standard $\dot{H}^s$ inner product of $a$ and $b$, more precisely,
$(a|b)_{\dot{H}^s}=(\Lambda^s a|\Lambda^s b).$

\vskip .3in
\section{Preliminaries}
\label{sec:Preliminaries}
In this section, we give some necessary definitions and  propositions.
\vskip .1in
The fractional Laplacian operator $\Lambda^\alpha=(-\Delta)^\frac{\alpha}{2}$ $(\alpha\ge0)$ is defined through the Fourier transform, namely,
$$\widehat{\Lambda^\alpha f}(\xi)=|\xi|^\alpha \widehat{f}(\xi),$$
where the Fourier transform is given by
$$\widehat{f}(\xi)=\int_{\R^d}e^{-ix\cdot\xi}f(x)dx.$$

Let $\mathfrak{B}=\{\xi\in\mathbb{R}^d,\ |\xi|\le\frac{4}{3}\}$ and $\mathfrak{C}=\{\xi\in\mathbb{R}^d,\ \frac{3}{4}\le|\xi|\le\frac{8}{3}\}$. Choose two nonnegative smooth radial function $\chi,\ \varphi$ supported, respectively, in $\mathfrak{B}$ and $\mathfrak{C}$ such that
$$\chi(\xi)+\sum_{j\ge0}\varphi(2^{-j}\xi)=1,\ \ \xi\in\mathbb{R}^d,$$
$$\sum_{j\in\mathbb{Z}}\varphi(2^{-j}\xi)=1,\ \ \xi\in\mathbb{R}^d\setminus\{0\}.$$
We denote $\varphi_{j}=\varphi(2^{-j}\xi),$ $h=\mathfrak{F}^{-1}\varphi$ and $\tilde{h}=\mathfrak{F}^{-1}\chi,$ where $\mathfrak{F}^{-1}$ stands for the inverse Fourier transform. Then the dyadic blocks
$\Delta_{j}$ and $S_{j}$ can be defined as follows
$$\Delta_{j}f=\varphi(2^{-j}D)f=2^{jd}\int_{\mathbb{R}^d}h(2^jy)f(x-y)dy,$$
$$S_{j}f=\sum_{k\le j-1}\Delta_{k}f=\chi(2^{-j}D)f=2^{jd}\int_{\mathbb{R}^d}\tilde{h}(2^jy)f(x-y)dy.$$
Formally, $\Delta_{j}=S_{j}-S_{j-1}$ is a frequency projection to annulus $\{\xi:\ C_{1}2^j\le|\xi|\le C_{2}2^j\}$, and $S_{j}$ is a frequency projection to the ball $\{\xi:\ |\xi|\le C2^j\}$. One  easily verifies that with our choice of $\varphi$
$$\Delta_{j}\Delta_{k}f=0\ {\rm if} \ |j-k|\ge2\ \ {\rm and}\ \  \Delta_{j}(S_{k-1}f\Delta_{k}f)=0\  {\rm if}\  |j-k|\ge5.$$
Let us recall the definition of the  Besov space.

\begin{define}\label{HB}
 Let $s\in \mathbb{R}$, $(p,q)\in[1,\infty]^2,$ the homogeneous Besov space $\dot{B}_{p,q}^s(\R^d)$ is defined by
$$\dot{B}_{p,q}^{s}(\R^d)=\{f\in \mathfrak{S}'(\R^d);\ \|f\|_{\dot{B}_{p,q}^{s}(\R^d)}<\infty\},$$
where
\begin{equation*}
\|f\|_{\dot{B}_{p,q}^s(\R^d)}=\left\{\begin{aligned}
&\displaystyle (\sum_{j\in \mathbb{Z}}2^{sqj}\|\Delta_{j}f\|_{L^p(\R^d)}^{q})^\frac{1}{q},\ \ \ \ {\rm for} \ \ 1\le q<\infty,\\
&\displaystyle \sup_{j\in\mathbb{Z}}2^{sj}\|\Delta_{j}f\|_{L^p(\R^d)},\ \ \ \ \ \ \ \ {\rm for}\ \ q=\infty,\\
\end{aligned}
\right.
\end{equation*}
and $\mathfrak{S}'(\R^d)$ denotes the dual space of $\mathfrak{S}(\R^d)=\{f\in\mathcal{S}(\mathbb{R}^d);\ \partial^{\alpha}\hat{f}(0)=0;\ \forall\ \alpha\in \ \mathbb{N}^d $\ {\rm multi-index}\} and can be identified by the quotient space of $\mathcal{S'}/\mathcal{P}$ with the polynomials space $\mathcal{P}$.
\end{define}
\begin{define}\label{INB}
 Let $s>0,$ and $(p,q)\in [1,\infty]^2$, the inhomogeneous Besov space $B_{p,q}^s(\R^d)$ is defined by
$${B}_{p,q}^{s}(\R^d)=\{f\in \mathcal{S'}(\mathbb {R}^d);\ \|f\|_{{B}_{p,q}^{s}(\R^d)}<\infty\},$$
where
$$\|f\|_{B_{p,q}^s(\R^d)}=\|f\|_{L^p(\R^d)}+\|f\|_{\dot{B}_{p,q}^s(\R^d)}.$$
\end{define}
For the special case $p=q=2$, we have
$$\|f\|_{\dot{H}^s(\R^d)} \thickapprox \|f\|_{\dot{B}_{2,2}^s(\R^d)},$$
where $a\thickapprox b$ means $C^{-1}b\le a\le C b$ for some positive  constant $C$,
and the $\dot{H}^s(\R^d)$  and $H^s(\R^d)$ ($s>0$) norm of $f$ can be also defined as follows:
$$\|f\|_{\dot{H}^s(\R^d)}:\stackrel{def}{=} \|\Lambda^s f\|_{L^2(\R^d)}$$
and
$$\|f\|_{H^s(\R^d)}:\stackrel{def}{=} \| f\|_{L^2(\R^d)}+\|\Lambda^s f\|_{L^2(\R^d)}.$$
\begin{lemma}($i$)\cite{KP}
Let $s>0$, $1\le p,r\le \infty,$ then
\begin{equation}\label{kp}
\|fg\|_{\dot{B}_{p,r}^{s}(\R^d)}\le C\left\{\|f\|_{L^{p_{1}}(\R^d)}\|g\|_{\dot{B}_{p_{2},r}^{s}(\R^d)}+\|g\|_{L^{r_{1}}(\R^d)}\|g\|_{\dot{B}_{r_{2},r}^{s}(\R^d)}\right\},
\end{equation}
where $1\le p_{1},r_{1}\le \infty$ such that $\frac{1}{p}=\frac{1}{p_{1}}+\frac{1}{p_{2}}=\frac{1}{r_{1}}+\frac{1}{r_{2}}$.\\
($ii$)\cite{KPV} Let $s>0$, and $1<p<\infty$,  then
\begin{equation}\label{kpv}\|[\Lambda^s,f]g\|_{L^p(\R^d)}\le C\left\{\|\nabla f\|_{L^{p_1}(\R^d)}\|\Lambda^{s-1}g\|_{L^{p_2}(\R^d)}+\|\Lambda^{s}f\|_{L^{p_3}(\R^d)}\|g\|_{L^{p_4}(\R^d)}\right\}
\end{equation}
where $1<p_2,p_3<\infty$ such that $\frac{1}{p}=\frac{1}{p_1}+\frac{1}{p_2}=\frac{1}{p_3}+\frac{1}{p_4}$.
\end{lemma}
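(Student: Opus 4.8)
Both estimates are classical, and the natural route is Bony's paraproduct calculus built on the Littlewood--Paley blocks $\Delta_{j},S_{j}$ introduced above; the plan is to decompose the relevant product and then reassemble the dyadic pieces. For part ($i$) I would use the Bony decomposition
$$fg=T_fg+T_gf+R(f,g),\qquad T_fg=\sum_{j}S_{j-1}f\,\Delta_{j}g,\quad R(f,g)=\sum_{|j-k|\le1}\Delta_{j}f\,\Delta_{k}g,$$
and bound the three pieces in $\dot{B}_{p,r}^{s}$ separately. In $T_fg$ each summand $S_{j-1}f\,\Delta_{j}g$ is spectrally supported in an annulus of size $2^{j}$, so $\Delta_{k}(T_fg)$ collects only the indices $|j-k|\le N$; Hölder with $\frac1p=\frac1{p_1}+\frac1{p_2}$ gives $\|\Delta_{k}(T_fg)\|_{L^p}\lesssim\|f\|_{L^{p_1}}\sum_{|j-k|\le N}\|\Delta_{j}g\|_{L^{p_2}}$, and after multiplying by $2^{sk}$ and taking $\ell^r$, Young's convolution inequality produces the first term on the right of (\ref{kp}). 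The symmetric paraproduct $T_gf$ gives the second term. For the remainder each block lives in a ball of radius $\sim2^{j}$, so $\Delta_{k}R$ sees all $j\gtrsim k$; here the hypothesis $s>0$ is precisely what makes the geometric sum $\sum_{j\ge k}2^{(k-j)s}$ converge, and $R$ is absorbed into either product on the right of (\ref{kp}).

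For part ($ii$) I would start from $[\Lambda^s,f]g=\Lambda^s(fg)-f\,\Lambda^s g$ and apply Bony's decomposition to both $fg$ and $f\,\Lambda^s g$. Grouping the two occurrences of the paraproduct $T_f$ into a commutator, the expression splits into the para-commutator $\sum_{j}[\Lambda^s,S_{j-1}f]\Delta_{j}g$, the reverse term $\Lambda^sT_gf-T_{\Lambda^s g}f$, and the remainder difference $\Lambda^sR(f,g)-R(f,\Lambda^s g)$. The genuine gain sits in the para-commutator: with the kernel representation $[\Lambda^s,S_{j-1}f]\Delta_{j}g(x)=\int h_{j}(x-y)\big(S_{j-1}f(x)-S_{j-1}f(y)\big)\Delta_{j}g(y)\,dy$, where $h_{j}$ is the convolution kernel of $\Lambda^s$ frequency-localized near $2^{j}$, a first-order Taylor expansion of $S_{j-1}f(x)-S_{j-1}f(y)$ along $(x-y)$ pairs the extra factor $(x-y)$ against the first moment of $h_{j}$. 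Scaling from the Schwartz function $\mathfrak F^{-1}(|\zeta|^s\psi(\zeta))$, with $\psi$ an annular cutoff, gives $\||\cdot|h_{j}\|_{L^1}\lesssim2^{(s-1)j}$, so by Minkowski's integral inequality $\|[\Lambda^s,S_{j-1}f]\Delta_{j}g\|_{L^p}\lesssim\|\nabla f\|_{L^{p_1}}\,2^{(s-1)j}\|\Delta_{j}g\|_{L^{p_2}}\sim\|\nabla f\|_{L^{p_1}}\|\Lambda^{s-1}\Delta_{j}g\|_{L^{p_2}}$: one derivative has moved onto $f$ while $\Lambda^s$ has softened to $\Lambda^{s-1}$ on $g$. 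In the reverse and remainder pieces there is no cancellation, but the $\Lambda^s$-weight $2^{sj}$ attaches to the common dyadic scale $2^{j}$ and can be reassigned to $f$ by Bernstein's inequality, so these are controlled by $\|\Lambda^sf\|_{L^{p_3}}\|g\|_{L^{p_4}}$.

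The step I expect to require the most care is reassembling the dyadic pieces of ($ii$) into the $L^p$-norm on the left. Unlike the clean $\ell^r$ summation of part ($i$), the output here is an $L^p$ norm, so the frequency-localized pieces must be recombined through the Littlewood--Paley square function $\|F\|_{L^p}\sim\big\|(\sum_{j}|\Delta_{j}F|^2)^{1/2}\big\|_{L^p}$ together with the Fefferman--Stein vector-valued maximal inequality applied to the high-frequency factor, namely $g$ in the para-commutator and $f$ in the remaining terms. This is exactly where the hypotheses $1<p<\infty$ and $1<p_2,p_3<\infty$ are forced, since both the square-function equivalence and the vector maximal estimate break down at the endpoints. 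The other quantitative ingredient, the moment bound $\||\cdot|h_{j}\|_{L^1}\lesssim2^{(s-1)j}$, is what encodes the smoothness of the multiplier $|\xi|^s$ and produces the derivative gain distinguishing (\ref{kpv}) from a naive estimate.
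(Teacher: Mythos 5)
Your proposal cannot be compared line-by-line with the paper, because the paper supplies no proof of this lemma at all: both inequalities are quoted as known results, \eqref{kp} from Kato--Ponce \cite{KP} and \eqref{kpv} from Kenig--Ponce--Vega \cite{KPV}, whose original arguments rest on Coifman--Meyer multiplier technology rather than on paraproducts. What you give instead is the standard modern Littlewood--Paley proof, and it is correct in outline: Bony decomposition with Young's inequality for the dyadic sums in part ($i$), where $s>0$ is indeed exactly what makes the remainder's geometric sum $\sum_{j\ge k}2^{(k-j)s}$ converge; and for part ($ii$) the splitting into para-commutator, reverse paraproduct, and remainder, with the first-moment bound $\||\cdot|\,h_j\|_{L^1}\lesssim 2^{(s-1)j}$ yielding the derivative gain. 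Notably, your kernel argument for the para-commutator is precisely a H\"older-exponent generalization (from $\nabla f\in L^\infty$ to $\nabla f\in L^{p_1}$) of the paper's Lemma \ref{l1} and of \eqref{SCE}, and your dyadic bookkeeping mirrors the paper's own proof of Proposition \ref{pp}; so your route is self-contained and consonant with the paper's toolkit, at the price of the one genuinely nontrivial step you correctly flag: recombining the pieces of ($ii$) in $L^p$ via the square-function equivalence and the Fefferman--Stein vector-valued maximal inequality, which is where $1<p<\infty$ and $1<p_2,p_3<\infty$ enter, and which to be fully rigorous requires dominating the translated low-frequency factor $\nabla S_{j-1}f(x-\tau z)$ by a Peetre-type maximal function before summing. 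One further payoff of your argument: it proves the second term of \eqref{kp} in the form $\|g\|_{L^{r_1}}\|f\|_{\dot{B}_{r_2,r}^{s}}$, confirming that the repeated $g$ in the paper's statement is a typo for $f$.
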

 The following proposition and lemma
provide Bernstein type inequalities for fractional derivatives and standard commutator estimate.
\begin{prop}
Let $\gamma\ge0$. Let $1\le p\le q\le \infty$.
\begin{enumerate}
\item[1)] If $f$ satisfies
$$
\mbox{supp}\, \widehat{f} \subset \{\xi\in \mathbb{R}^d: \,\, |\xi|
\le \mathcal{K} 2^j \},
$$
for some integer $j$ and a constant $\mathcal{K}>0$, then
$$
\|(-\Delta)^\gamma f\|_{L^q(\mathbb{R}^d)} \le C_1(\gamma,p,q)\, 2^{2\gamma j +
j d(\frac{1}{p}-\frac{1}{q})} \|f\|_{L^p(\mathbb{R}^d)}.
$$
\item[2)] If $f$ satisfies
\begin{equation*}\label{spp}
\mbox{supp}\, \widehat{f} \subset \{\xi\in \mathbb{R}^d: \,\, \mathcal{K}_12^j
\le |\xi| \le \mathcal{K}_2 2^j \}
\end{equation*}
for some integer $j$ and constants $0<\mathcal{K}_1\le \mathcal{K}_2$, then
$$
C_1(\gamma,p,q)\, 2^{2\gamma j} \|f\|_{L^q(\mathbb{R}^d)} \le \|(-\Delta)^\gamma
f\|_{L^q(\mathbb{R}^d)} \le C_2(\gamma,p,q)\, 2^{2\gamma j +
j d(\frac{1}{p}-\frac{1}{q})} \|f\|_{L^p(\mathbb{R}^d)}.
$$
\end{enumerate}
\end{prop}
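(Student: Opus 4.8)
The plan is to prove both parts by the same mechanism: reduce each inequality, via a Fourier multiplier and Young's convolution inequality, to the $L^r$-integrability of a single rescaled kernel, and then track the powers of $2^j$ by a change of variables $\xi=2^j\zeta$.

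For part 1), since $\operatorname{supp}\widehat f$ lies in the ball $\{|\xi|\le\mathcal K 2^j\}$, I would first fix a radial $\Phi\in C_c^\infty(\R^d)$ with $\Phi\equiv1$ on $\{|\xi|\le\mathcal K\}$ and set $\Phi_j(\xi)=\Phi(2^{-j}\xi)$, so that $\widehat f=\Phi_j\widehat f$. Then $(-\Delta)^\gamma f=g_j*f$ with $g_j=\mathfrak F^{-1}(|\cdot|^{2\gamma}\Phi_j)$, and Young's inequality gives $\|(-\Delta)^\gamma f\|_{L^q}\le\|g_j\|_{L^r}\|f\|_{L^p}$, where $1+\tfrac1q=\tfrac1r+\tfrac1p$ (admissible because $\tfrac1p-\tfrac1q\ge0$ forces $r\in[1,\infty]$). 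The scaling $\xi=2^j\zeta$ yields $g_j(x)=2^{j(2\gamma+d)}G(2^jx)$ with $G=\mathfrak F^{-1}(|\cdot|^{2\gamma}\Phi)$ independent of $j$, so that $\|g_j\|_{L^r}=2^{j(2\gamma+d(1/p-1/q))}\|G\|_{L^r}$, using $1-\tfrac1r=\tfrac1p-\tfrac1q$. This is precisely the claimed bound with $C_1=\|G\|_{L^r}$.

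For part 2) the upper bound is immediate from part 1), since an annulus is contained in a ball. For the lower bound I would invert the multiplier: choose $\widetilde\Phi\in C_c^\infty(\R^d)$ supported in an annulus away from the origin and equal to $1$ on $\{\mathcal K_1\le|\xi|\le\mathcal K_2\}$, and set $\Psi_j(\xi)=|\xi|^{-2\gamma}\widetilde\Phi(2^{-j}\xi)$. Because $\Psi_j(\xi)|\xi|^{2\gamma}\equiv1$ on $\operatorname{supp}\widehat f$, one gets $f=\psi_j*(-\Delta)^\gamma f$ with $\psi_j=\mathfrak F^{-1}\Psi_j$, and Young's inequality with the $L^1$ kernel $\psi_j$ gives $\|f\|_{L^q}\le\|\psi_j\|_{L^1}\|(-\Delta)^\gamma f\|_{L^q}$. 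The same rescaling gives $\|\psi_j\|_{L^1}=2^{-2\gamma j}\|\mathfrak F^{-1}(|\cdot|^{-2\gamma}\widetilde\Phi)\|_{L^1}$, which rearranges to the stated lower bound.

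The main obstacle, and really the only non-routine point, is verifying that the fixed kernels actually lie in the required $L^r$ (resp.\ $L^1$) spaces. In part 2) this is easy: since $\widetilde\Phi$ is supported away from the origin, $|\cdot|^{-2\gamma}\widetilde\Phi\in C_c^\infty$ and its inverse transform is Schwartz, hence in $L^1$. In part 1) the multiplier $|\cdot|^{2\gamma}\Phi$ is compactly supported but only finitely smooth at the origin when $\gamma\notin\mathbb{Z}$; I would handle this by splitting off a cutoff near the origin, noting that $G$ is bounded because $|\cdot|^{2\gamma}\Phi\in L^1$, while the inverse transform of the homogeneous piece decays like $|x|^{-d-2\gamma}$ at infinity. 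Since then $\gamma>0$, this decay is integrable and places $G\in L^r$ for every $1\le r\le\infty$; when $\gamma$ is a nonnegative integer the multiplier is smooth and $G$ is Schwartz, so the point is trivial. This completes the estimate in all cases $\gamma\ge0$.
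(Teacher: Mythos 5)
The paper offers no proof of this proposition---it is quoted as a standard Bernstein-type inequality (with the references Bahouri--Chemin--Danchin, Grafakos, Stein cited nearby)---and your argument is exactly the standard one: writing $(-\Delta)^\gamma f$ as convolution with a rescaled multiplier kernel and applying Young's inequality with $1+\tfrac1q=\tfrac1r+\tfrac1p$ for the upper bound, then inverting the symbol $|\xi|^{2\gamma}$ on an annulus (where $|\cdot|^{-2\gamma}\widetilde\Phi$ is $C_c^\infty$, hence has Schwartz inverse transform) for the lower bound. Your handling of the one genuinely delicate point---that for non-integer $\gamma$ the kernel $G=\mathfrak{F}^{-1}(|\cdot|^{2\gamma}\Phi)$ is only finitely smooth on the Fourier side at the origin, so its membership in $L^r$ must be checked via boundedness (from $|\cdot|^{2\gamma}\Phi\in L^1$) together with the $|x|^{-d-2\gamma}$ decay at infinity, which is integrable for every $r\ge1$ precisely because $\gamma>0$ in that case---is correct, so the proof is complete.
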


\begin{lemma} \cite{BCD}
Let $\theta$ be a $C^1$ function on $\R^d$ such that $(1+|\cdot|)\widehat{\theta}\in L^1(\R^d).$ There exists a constant $C$ such that for any Lipschitz function
$a$ with gradient in $L^p(\R^d)$ and any function $b$ in $L^q(\R^d)$, we have for any positive $\lambda$,
\begin{equation}\label{SCE}
\|[\theta(\lambda^{-1}D),a]b\|_{L^r(\R^d)}\le C\lambda^{-1}\|\nabla a\|_{L^p(\R^d)}\|b\|_{L^q(\R^d)}, \ \ {\rm with}\ \frac{1}{p}+\frac{1}{q}=\frac{1}{r}.
\end{equation}
\end{lemma}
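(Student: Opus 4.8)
The plan is to realize $\theta(\lambda^{-1}D)$ as a convolution operator and to extract the gain $\lambda^{-1}$ from a first-order Taylor expansion of $a$. First I would write the symbol as a kernel: setting $h=\mathfrak{F}^{-1}\theta$, the scaling relation $\mathfrak{F}^{-1}[\theta(\lambda^{-1}\cdot)](x)=\lambda^d h(\lambda x)=:K_\lambda(x)$ gives
$$\theta(\lambda^{-1}D)f(x)=\int_{\R^d}K_\lambda(x-y)f(y)\,dy.$$
Because multiplication by $a(x)$ factors through the single point $x$, the commutator collapses to
$$[\theta(\lambda^{-1}D),a]b(x)=\int_{\R^d}K_\lambda(x-y)\big(a(y)-a(x)\big)b(y)\,dy,$$
which already exhibits the mechanism: the difference $a(y)-a(x)$ is what will produce a factor $y-x$ to pair with the kernel.

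Next I would insert the mean-value representation $a(y)-a(x)=\int_0^1 (y-x)\cdot\nabla a\big(x+\tau(y-x)\big)\,d\tau$, valid for Lipschitz $a$. After the change of variable $z=x-y$ this rewrites the commutator as
$$[\theta(\lambda^{-1}D),a]b(x)=-\int_0^1\!\!\int_{\R^d} z\,K_\lambda(z)\cdot\nabla a(x-\tau z)\,b(x-z)\,dz\,d\tau.$$
The crucial observation is the scaling identity $z\,K_\lambda(z)=\lambda^{-1}\Psi_\lambda(z)$ with $\Psi(w):=w\,h(w)$ and $\Psi_\lambda(z):=\lambda^d\Psi(\lambda z)$, so that $\|\Psi_\lambda\|_{L^1}=\|\Psi\|_{L^1}=\||\cdot|\,h\|_{L^1}$ is scale invariant. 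This is exactly where the hypothesis enters. Since $h=\mathfrak{F}^{-1}\theta$ differs from $\widehat{\theta}$ only by a reflection and a fixed constant, the first moments agree, and $(1+|\cdot|)\widehat{\theta}\in L^1$ forces $\||\cdot|\,h\|_{L^1}\le C\|(1+|\cdot|)\widehat{\theta}\|_{L^1}<\infty$.

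Finally I would take the $L^r$ norm in $x$ and push it inside the $d\tau\,dz$ integrals by Minkowski's integral inequality, then apply H\"older with $\frac1r=\frac1p+\frac1q$ together with translation invariance of the Lebesgue norms, which gives $\|\nabla a(\cdot-\tau z)\,b(\cdot-z)\|_{L^r}\le\|\nabla a\|_{L^p}\,\|b\|_{L^q}$ uniformly in $\tau$ and $z$. Collecting constants yields
$$\|[\theta(\lambda^{-1}D),a]b\|_{L^r}\le\lambda^{-1}\|\Psi\|_{L^1}\,\|\nabla a\|_{L^p}\,\|b\|_{L^q},$$
which is precisely \eqref{SCE}. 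The only delicate point is the middle step: one must verify that the factor $y-x$ generated by the Taylor expansion combines with $K_\lambda$ to produce exactly one power of $\lambda^{-1}$ and a scale-invariant $L^1$ kernel, and that the moment condition on $\widehat{\theta}$ is exactly what guarantees the integrability of that kernel. Everything after that is routine Young/H\"older bookkeeping.
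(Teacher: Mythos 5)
Your proof is correct and is essentially the argument given for this lemma in the cited reference \cite{BCD} (the paper itself offers no proof, only the citation): realize $\theta(\lambda^{-1}D)$ as convolution with $K_\lambda(x)=\lambda^d h(\lambda x)$, use the first-order Taylor formula for the Lipschitz function $a$ to produce the factor $x-y$, absorb it into the scale-invariant kernel $|\cdot|h$ at the cost of exactly one power of $\lambda^{-1}$, and conclude by Minkowski and H\"older with $\frac1r=\frac1p+\frac1q$. All steps check out, including the observation that $(1+|\cdot|)\widehat{\theta}\in L^1$ controls $\||\cdot|h\|_{L^1}$ since $h=\mathfrak{F}^{-1}\theta$ agrees with $\widehat{\theta}$ up to reflection and a constant.
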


For more details about Besov space and Sobolev space such as some useful embedding inequalities, we refer to
\cite{BCD}, \cite{Grafakos} and \cite{Stein}.
\vskip .1in

The rest of this section is devoted to the proof of a generalized commutator estimate in Besov space. Firstly, we need a lemma.
\begin{lemma}\label{l1}
Let $1\le p,p_1,p_2\le \infty$ satisfying $1+\frac{1}{p}=\frac{1}{p_1}+\frac{1}{p_2}.$ If $xh\in L^{p_1}(\R^d)$, $\nabla f\in L^\infty (\R^d)$ and
$g\in L^{p_2}(\R^d)$, then
\begin{equation}\label{lWu}
\|h\star(fg)-f(h\star g)\|_{L^{p}(\R^d)}\le C\|xh\|_{L^{p_1}(\R^d)}\|\nabla f\|_{L^\infty(\R^d)}\|g\|_{L^{p_2}(\R^d)},
\end{equation}
where $C$ is a constant independent of $f,g,h$.
\end{lemma}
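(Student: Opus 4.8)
The plan is to reduce everything to a pointwise bound followed by Young's convolution inequality. First I would write the difference explicitly as a single convolution integral. Since $(h\star(fg))(x)=\int_{\R^d}h(y)f(x-y)g(x-y)\,dy$ and $f(x)\,(h\star g)(x)=\int_{\R^d}h(y)f(x)g(x-y)\,dy$, subtracting gives
\[
(h\star(fg))(x)-f(x)\,(h\star g)(x)=\int_{\R^d}h(y)\bigl(f(x-y)-f(x)\bigr)g(x-y)\,dy.
\]
The virtue of this form is that the commutator structure has produced the finite difference $f(x-y)-f(x)$, which is precisely where the gradient of $f$ will enter.

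Second I would control this difference by the fundamental theorem of calculus: writing $f(x-y)-f(x)=-\int_0^1 y\cdot\nabla f(x-\tau y)\,d\tau$ yields the pointwise estimate $|f(x-y)-f(x)|\le |y|\,\|\nabla f\|_{L^\infty(\R^d)}$. Substituting this bound into the integral gives
\[
\bigl|(h\star(fg))(x)-f(x)\,(h\star g)(x)\bigr|\le \|\nabla f\|_{L^\infty(\R^d)}\int_{\R^d}|y|\,|h(y)|\,|g(x-y)|\,dy=\|\nabla f\|_{L^\infty(\R^d)}\,\bigl((|\cdot|\,|h|)\star|g|\bigr)(x),
\]
recognizing that $y\mapsto |y|\,|h(y)|$ is exactly the function whose $L^{p_1}(\R^d)$ norm equals $\|xh\|_{L^{p_1}(\R^d)}$.

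Finally I would take the $L^p(\R^d)$ norm of both sides and apply Young's inequality for convolutions, which under the stated relation $1+\frac1p=\frac1{p_1}+\frac1{p_2}$ gives $\|(|\cdot|\,|h|)\star|g|\|_{L^p(\R^d)}\le \|xh\|_{L^{p_1}(\R^d)}\|g\|_{L^{p_2}(\R^d)}$. This immediately produces \eqref{lWu}, in fact with $C=1$.

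I do not anticipate a genuine obstacle here, since the argument is elementary once the difference is written as above. The only place where the hypotheses are essential is that the finite difference $f(x-y)-f(x)$ be handled by a gradient bound rather than by a naive triangle inequality, so as to retain the factor $|y|$ that converts $h$ into $xh$; and that the exponent relation $1+\frac1p=\frac1{p_1}+\frac1{p_2}$ be exactly the one making Young's inequality applicable. Minor care is needed when $p_1$ or $p_2$ equals $\infty$, but the same Young/H\"older reasoning covers those endpoint cases without modification.
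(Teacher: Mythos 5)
Your proof is correct, and it is exactly the argument the paper points to: the paper omits the details, citing the idea of Lemma 2.1 in Wu's work \cite{WuJMFM}, which is precisely this standard scheme of rewriting the commutator as $\int h(y)\bigl(f(x-y)-f(x)\bigr)g(x-y)\,dy$, bounding the finite difference by $|y|\,\|\nabla f\|_{L^\infty}$, and concluding with Young's convolution inequality. Your version even makes explicit that the constant can be taken $C=1$, so nothing is missing.
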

\begin{proof}[Proof of Lemma \ref{l1}]
 (\ref{lWu}) can be proved by using the idea of Lemma 2.1 in \cite{WuJMFM}, so we omit the details.
\end{proof}

\begin{prop}\label{pp}
Let $s\ge0$, $\sigma>-1$ and $1\le p,r\le \infty$, then
\begin{equation}\label{CE}
\|[\Lambda^s,f\cdot\nabla]g\|_{\dot{B}_{p,r}^\sigma(\R^d)}\le C\left\{\|\nabla f\|_{L^\infty(\R^d)}\|g\|_{\dot{B}_{p,r}^{\sigma+s}(\R^d)}
+\|\nabla g\|_{L^\infty(\R^d)}\|f\|_{\dot{B}_{p,r}^{\sigma+s}(\R^d)}\right\},
\end{equation}
where ${\rm div}f=0$ and the constant $C$ is independent of $f$ and $g$.
\end{prop}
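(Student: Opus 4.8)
The plan is to combine Bony's paraproduct decomposition with the Littlewood--Paley characterization of $\dot{B}_{p,r}^\sigma$, reducing the commutator to three pieces and estimating each after a frequency localization. Writing $f\cdot\nabla g=\sum_i f^i\partial_i g$ and applying Bony's decomposition to both $f^i\partial_i g$ and $f^i\partial_i\Lambda^s g$ (and using $\partial_i\Lambda^s=\Lambda^s\partial_i$), I would obtain, with summation over $i$ and with $T,R$ the paraproduct and remainder operators,
\begin{equation*}
[\Lambda^s,f\cdot\nabla]g=\underbrace{[\Lambda^s,T_{f^i}]\partial_i g}_{\mathrm{I}}
+\underbrace{\big(\Lambda^s T_{\partial_i g}f^i-T_{\partial_i\Lambda^s g}f^i\big)}_{\mathrm{II}}
+\underbrace{\big(\Lambda^s R(f^i,\partial_i g)-R(f^i,\partial_i\Lambda^s g)\big)}_{\mathrm{III}}.
\end{equation*}
Since $[\Lambda^0,\cdot]=0$, the estimate is trivial when $s=0$, so I may assume $s>0$ throughout.

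The genuine commutator $\mathrm{I}$ is the main term, and is where the generalized commutator Lemma \ref{l1} enters. After localizing, $\Delta_j\mathrm{I}=\sum_{|k-j|\le N_1}\Delta_j[\Lambda^s,S_{k-1}f^i]\Delta_k\partial_i g$, and on the (annular) spectral support of $S_{k-1}f^i\,\Delta_k\partial_i g$ one has $\Lambda^s=2^{ks}\phi(2^{-k}D)$ for a fixed $\phi\in C_c^\infty(\R^d\setminus\{0\})$ equal to $|\xi|^s$ there, whose kernel $h_k$ satisfies $\|xh_k\|_{L^1}\le C2^{-k}$. Thus $[\Lambda^s,S_{k-1}f^i]\Delta_k\partial_i g=2^{ks}\big(h_k\star(S_{k-1}f^i\,\Delta_k\partial_i g)-S_{k-1}f^i\,(h_k\star\Delta_k\partial_i g)\big)$ is exactly of the form treated by \eqref{lWu} (with $p_1=1$, $p_2=p$), giving
\begin{equation*}
\|[\Lambda^s,S_{k-1}f^i]\Delta_k\partial_i g\|_{L^p}\le C2^{ks}2^{-k}\|\nabla f\|_{L^\infty}\|\Delta_k\partial_i g\|_{L^p}\le C2^{ks}\|\nabla f\|_{L^\infty}\|\Delta_k g\|_{L^p},
\end{equation*}
where the last step uses Bernstein's inequality $\|\Delta_k\partial_i g\|_{L^p}\approx 2^k\|\Delta_k g\|_{L^p}$. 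Multiplying by $2^{j\sigma}$, summing over $|k-j|\le N_1$ and taking the $\ell^r_j$ norm yields the bound $C\|\nabla f\|_{L^\infty}\|g\|_{\dot{B}_{p,r}^{\sigma+s}}$. I expect the main obstacle to be precisely this term: one must justify replacing $\Lambda^s$ by the rescaled compactly supported multiplier $2^{ks}\phi(2^{-k}D)$ on the relevant annulus and verify $\|xh_k\|_{L^1}\le C2^{-k}$, so that the $s$ derivatives are redistributed without loss.

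Term $\mathrm{II}$ carries no commutator structure, so I would bound its two paraproducts separately. Each is frequency localized at scale $2^k$, so after $\Delta_j$ only $|k-j|\le N_2$ contribute. Using $\|S_{k-1}\partial_i g\|_{L^\infty}\le C\|\nabla g\|_{L^\infty}$ for the first piece and $\|S_{k-1}\Lambda^s\partial_i g\|_{L^\infty}\le C2^{ks}\|\nabla g\|_{L^\infty}$ for the second (the latter via a geometric sum that converges precisely because $s>0$), both reduce, in $\ell^r_j$, to $C\|\nabla g\|_{L^\infty}\,\|f\|_{\dot{B}_{p,r}^{\sigma+s}}$.

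The remainder term $\mathrm{III}$ is where the divergence-free hypothesis and the restriction $\sigma>-1$ are used. Since $\operatorname{div}f=0$, Leibniz applied to $R$ gives $\sum_i R(f^i,\partial_i h)=\sum_i\partial_i R(f^i,h)$ for $h=g$ and $h=\Lambda^s g$, whence $\mathrm{III}=\sum_i\partial_i\big(\Lambda^s R(f^i,g)-R(f^i,\Lambda^s g)\big)$; the derivative pulled outside is what tames the high--high-to-low sum. Applying $\Delta_m$ and using $\|\tilde{\Delta}_k g\|_{L^\infty}\approx 2^{-k}\|\nabla g\|_{L^\infty}$, the two pieces produce convolution sums over $k\ge m-N_3$ of the shape $2^{(m-k)(\sigma+s+1)}2^{k(\sigma+s)}\|\Delta_k f\|_{L^p}$ and $2^{(m-k)(\sigma+1)}2^{k(\sigma+s)}\|\Delta_k f\|_{L^p}$, which converge in $\ell^r_m$ (by Young's inequality) exactly when $\sigma+s+1>0$ and $\sigma+1>0$, both guaranteed by $s\ge0$ and $\sigma>-1$, and are dominated by $C\|\nabla g\|_{L^\infty}\|f\|_{\dot{B}_{p,r}^{\sigma+s}}$. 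Collecting the bounds for $\mathrm{I}$, $\mathrm{II}$ and $\mathrm{III}$ gives \eqref{CE}.
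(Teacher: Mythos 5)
Your proof is correct, and it reaches \eqref{CE} by a genuinely different organization of the same ingredients. The paper never forms the operator-level paraproduct splitting: instead it inserts the middle term $f\cdot\nabla\Delta_j\Lambda^s g$, so that $\Delta_j[\Lambda^s,f\cdot\nabla]g$ splits into the two block commutators $[\Delta_j,f\cdot\nabla]\Lambda^s g$ and $[\Delta_j\Lambda^s,f\cdot\nabla]g$ (its $K_1$ and $K_2$), each of which is then Bony-decomposed into four pieces, with the cancellation exploited twice --- via the standard estimate \eqref{SCE} for $K_{11}$ and via Lemma \ref{l1} for $K_{21}$, after writing $\Delta_j\Lambda^s$ as convolution with $2^{j(d+s)}\zeta(2^j\cdot)$, $\zeta\in\mathcal{S}(\R^d)$. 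You Bony-decompose once, so only the single genuine commutator $[\Lambda^s,T_{f^i}]\partial_i g$ survives, and your treatment of it (replacing $\Lambda^s$ by $2^{ks}\phi(2^{-k}D)$ on the annulus carrying $S_{k-1}f^i\,\Delta_k\partial_i g$, checking $\|xh_k\|_{L^1}\le C2^{-k}$, and invoking \eqref{lWu}) is exactly the paper's $K_{21}$ computation in disguise; your term $\mathrm{II}$ corresponds to $K_{12}$, $K_{22}$, your term $\mathrm{III}$ to $K_{14}$, $K_{24}$ (both proofs use ${\rm div}f=0$ to pull a derivative out of the high-high part and close by Young's inequality for series under $\sigma>-1$ and $\sigma+s>-1$, the latter automatic), while the paper's extra pieces $K_{13}$, $K_{23}$ are bookkeeping artifacts of the inserted middle term whose content is absorbed into your $\mathrm{II}$ and $\mathrm{III}$. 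What your route buys: three groups instead of eight, a single application of the kernel lemma, and an explicit accounting of hypotheses --- in particular your remark that $s=0$ is trivial since $\Lambda^0=\mathrm{Id}$ cleanly licenses the geometric sum $\sum_{l\le k-2}2^{ls}\le C2^{ks}$ needed in $\mathrm{II}$ (the paper's $K_{12}$ performs the same summation silently; at $s=0$ it survives there only because $S_{k-1}$ is uniformly bounded on $L^\infty$). What the paper's route buys: its $K_1$ block reproduces the classical commutator bound for $[\Delta_j,f\cdot\nabla]$ along the way, and every estimate is kept at the level of explicit $\ell^r$ sequences $c_j$. Two immaterial points to fix in your write-up: you bound $\mathrm{III}$ by $\|\nabla g\|_{L^\infty}\|f\|_{\dot{B}_{p,r}^{\sigma+s}}$ whereas the paper applies Bernstein to $\Delta_k f$ and gets $\|\nabla f\|_{L^\infty}\|g\|_{\dot{B}_{p,r}^{\sigma+s}}$ --- both land in \eqref{CE}; and your displayed equivalences $\|\Delta_k\partial_i g\|_{L^p}\approx 2^{k}\|\Delta_k g\|_{L^p}$ and $\|\tilde{\Delta}_k g\|_{L^\infty}\approx 2^{-k}\|\nabla g\|_{L^\infty}$ hold only as one-sided inequalities (with $\le C$ in the direction you actually use), so state them as such.
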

For the proof, we shall use homogeneous Bony's decomposition:
$$uv=\sum_{j\in \mathbb{Z}}S_{j-1}u\Delta_{j}v+\sum_{j\in \mathbb{Z}}\Delta_{j}uS_{j-1}v+\sum_{j\in\mathbb{Z}}\Delta_{j}u\tilde{\Delta}_{j}v,$$
where $\tilde{\Delta}_{j}=\Delta_{j-1}+\Delta_{j}+\Delta_{j+1},$ which is   applied to split the commutator $\Theta=[\Delta_{j}, u]v$ as follows:
\begin{equation*}
\begin{aligned}
\Theta=&\sum_{|k-j|\le 4}[\Delta_{j},S_{k-1}u]\Delta_{k}v+\sum_{|k-j|\le 4}\Delta_{j}(\Delta_{k}u\ S_{k-1}v)\\
&+\sum_{k\ge j-2}\Delta_{k}u\ \Delta_{j}S_{k+2}v+\sum_{k\ge j-3}\Delta_{j}(\Delta_{k}u\ \tilde{\Delta}_{k}v).
\end{aligned}
\end{equation*}
If we replace $\Lambda^s$ by Riesz operator $\Lambda^{-1}\partial_1$ or the operator $\Lambda^{-\alpha}\partial_1$ ($0<\alpha<1$) in (\ref{CE}),   \cite{Hmidi1}-\cite{JMWZ}  established some similar estimates, which play the essential role in the proof of the global well-posedness for 2D Boussinesq equations.
\begin{proof}[Proof of Proposition \ref{pp}]
It suffices to prove the case $1\le r<\infty$, the case $r=\infty$ can be bounded similarly.  In this proof, $(c_{j})_{j\in\mathbb{Z}}$  is a generic element of $l^r(\mathbb{Z})$ so that $\sum_{j\in\Z}c_j^r\le1$. {\bf 1} stands for the characteristic function.  We split the left hand side of (\ref{CE}) into two terms.
\begin{equation}\label{2.1}
\begin{aligned}
\|[\Lambda^s,&f\cdot\nabla]g\|_{\dot{B}_{p,r}^\sigma}\le C\left(\sum_{j\in \Z} 2^{j\sigma r}\|\Delta_j(f\cdot\nabla \Lambda^s g)-f\cdot\nabla \Delta_j \Lambda^s g\|_{L^p}^r\right)^\frac{1}{r}\\
 &+C\left(\sum_{j\in \Z} 2^{j\sigma r}\|\Delta_j \Lambda^s(f\cdot\nabla g)-f\cdot\nabla \Delta_j \Lambda^s g\|_{L^p}^r\right)^\frac{1}{r}\\
\le& C\left(\sum_{j\in\Z}2^{j\sigma r}\|[\Delta_j, f\cdot\nabla]\Lambda^sg\|_{L^p}^r\right)^\frac{1}{r}+C\left(\sum_{j\in\Z}2^{j\sigma r}\|[\Delta_j\Lambda^s, f\cdot\nabla]\Lambda^sg\|_{L^p}^r\right)^\frac{1}{r}\\
=& K_1+K_2.
\end{aligned}
\end{equation}
{\bf The Estimate of $K_1$.} Using the homogeneous Bony's decomposition,
\begin{equation*}
\begin{aligned}
\|[\Delta_j,f\cdot\nabla]\Lambda^sg\|_{L^p}\le& \sum_{|k-j|\le 4}\|[\Delta_j,S_{k-1}f\cdot\nabla]\Delta_k\Lambda^sg\|_{L^p}
+\sum_{|k-j|\le 4}\|\Delta_j(\Delta_k f\cdot\nabla S_{k-1}\Lambda^s g)\|_{L^p}\\
&+\sum_{k\ge j-2}\|\Delta_k f\cdot\nabla \Delta_j S_{k+2}\Lambda^s g\|_{L^p}
+\sum_{k\ge j-3}\|\Delta_j(\Delta_k f\cdot\nabla \tilde{\Delta}_k \Lambda^s g)\|_{L^p}\\
=&K_{11}+K_{12}+K_{13}+K_{14}.
\end{aligned}
\end{equation*}
Thanks to (\ref{SCE}) and Bernstein's inequality,
\begin{equation*}
\begin{aligned}
K_{11}\le& C\sum_{|k-j|\le4}\|\nabla S_{k-1}f\|_{L^\infty}\|\Delta_k \Lambda^s g\|_{L^p}\\
\le& C2^{-j\sigma}\|\nabla f\|_{L^\infty}\sum_{|k-j|\le4} 2^{(j-k)\sigma}2^{k\sigma}\|\Delta_k \Lambda^s g\|_{L^p}\\
\le& C2^{-j\sigma}\|\nabla f\|_{L^\infty}\|\Lambda^s g\|_{\dot{B}_{p,r}^\sigma}\sum_{|k-j|\le4} 2^{(j-k)\sigma}c_k\\
\le& Cc_j 2^{-j\sigma}\|\nabla f\|_{L^\infty}\| g\|_{\dot{B}_{p,r}^{\sigma+s}}.
\end{aligned}
\end{equation*}
By H\"{o}lder's inequality and Bernstein's inequality, we get for $s\ge 0$,
\begin{equation*}
\begin{aligned}
K_{12}\le& C\sum_{|k-j|\le4}2^{ks}\|\nabla S_{k-1}g\|_{L^\infty}\|\Delta_k f\|_{L^p}\\
\le& C\|\nabla g\|_{L^\infty}\sum_{|k-j|\le4}2^{ks}\|\Delta_k f\|_{L^p}\\
\le& C2^{-j\sigma}\|\nabla g\|_{L^\infty}\sum_{|k-j|\le4}2^{(j-k)\sigma}2^{k(\sigma+s)}\|\Delta_k f\|_{L^p}\\
\le& C2^{-j\sigma}\|\nabla g\|_{L^\infty}\| f\|_{\dot{B}_{p,r}^{\sigma+s}}\sum_{|k-j|\le4}2^{(j-k)\sigma}c_k\\
=& Cc_j2^{-j\sigma}\|\nabla g\|_{L^\infty}\| f\|_{\dot{B}_{p,r}^{\sigma+s}}.
\end{aligned}
\end{equation*}
For the term $K_{13}$,
\begin{equation*}
\begin{aligned}
K_{13}\le& C2^{j(1+s)}\|\Delta_j g\|_{L^p}\sum_{k\ge j-2}\|\Delta_k f \|_{L^\infty}\\
\le& C2^{j(1+s)}\|\Delta_j g\|_{L^p}\sum_{k\ge j-2}2^{-k}\|\nabla \Delta_k f \|_{L^\infty}\\
\le& C2^{js}\|\Delta_j g\|_{L^p}\|\nabla f\|_{L^\infty}\\
=& Cc_j 2^{-j\sigma}\|\nabla f\|_{L^\infty}\|g\|_{\dot{B}_{p,r}^{s+\sigma}}.
\end{aligned}
\end{equation*}
Using ${\rm  div} f=0$, Bernstein's inequality and H\"{o}lder's inequality,
\begin{equation*}
\begin{aligned}
K_{14}\le& 2^j \sum_{k\ge j-3}\|\Delta_k f\|_{L^\infty}\|\Lambda^s\tilde{\Delta}_k g \|_{L^p}\\
\le& C2^{-j\sigma}\sum_{k\ge j-3}2^{j(\sigma+1)-k}\|\nabla\Delta_k f\|_{L^\infty}\|\Lambda^s\tilde{\Delta}_k g \|_{L^p}\\
\le& C2^{-j\sigma}\|\nabla f\|_{L^\infty}\sum_{k\ge j-3}2^{(j-k)(\sigma+1)}2^{k\sigma}\|\Lambda^s\tilde{\Delta}_k g \|_{L^p}\\
\le& C2^{-j\sigma}\|\nabla f\|_{L^\infty}\| g\|_{\dot{B}_{p,r}^{\sigma+s}}\sum_{k\ge j-3}2^{(j-k)(\sigma+1)}c_k\\
=& Cc_j2^{-j\sigma}\|\nabla f\|_{L^\infty}\| g\|_{\dot{B}_{p,r}^{\sigma+s}},
\end{aligned}
\end{equation*}
where we have used Young's inequality for series for the last equality, namely, $\forall \sigma>-1$,
$$\sum_{k\in\mathbb{Z}}\sum_{k\ge j-3}2^{(j-k)(\sigma+1)}c_k\le \|2^{-(\sigma+1)k}{\bf 1}_{k\ge -3}\|_{l^1(\mathbb{Z})}\|c_k\|_{l^1(\mathbb{Z})}\le C.$$
Thus,
$$
K_1\le \sum_{1\le i\le 4}(\sum_{j\in\Z}2^{j\sigma r}K_{1i}^r)^\frac{1}{r}\le C\left\{\|\nabla f\|_{L^\infty}\| g\|_{\dot{B}_{p,r}^{\sigma+s}}+\|\nabla g\|_{L^\infty}\| f\|_{\dot{B}_{p,r}^{\sigma+s}}\right\}.
$$
{\bf The Estimate of $K_2$.} Using the homogeneous Bony's decomposition again,
\begin{equation*}
\begin{aligned}
\|[\Delta_j\Lambda^s,f\cdot\nabla]g\|_{L^p}\le& \sum_{|k-j|\le4}\|[\Delta_j\Lambda^s,S_{k-1}f\cdot\nabla]\Delta_kg\|_{L^p}
+\sum_{|k-j|\le4}\|\Delta_j\Lambda^s (\Delta_k f\cdot\nabla S_{k-1}g)\|_{L^p}\\
&+\sum_{k\ge j-2}\|\Delta_k f\cdot\nabla \Delta_j S_{k+2}\Lambda^s g\|_{L^p}
+\sum_{k\ge j-3}\|\Delta_j\Lambda^s(\Delta_kf\cdot\nabla\tilde{\Delta}_k)g\|_{L^p}\\
=&K_{21}+K_{22}+K_{23}+K_{24}.
\end{aligned}
\end{equation*}
Since
$$\widehat{\Delta_j \Lambda^s f}(\xi)=\varphi(2^{-j}\xi)|\xi|^s\widehat{f}(\xi)=2^{js}\varphi(2^{-j}\xi)|2^{-j}\xi|^s\widehat{f}(\xi),$$
we can represent $\Delta_j\Lambda^s f$ as a convolution, namely,
$$\Delta_j\Lambda^s f(x)=\{2^{j(d+s)}\zeta(2^j\cdot)\star f \}(x),\ {\rm for \ some}\  \zeta\in \mathcal{S}(\R^d).$$
(\ref{SCE}) can not be used to bound $K_{21}$, but thanks to (\ref{lWu}), and using Bernstein's inequality,
\begin{equation*}
\begin{aligned}
K_{21}\le& C\|x2^{j(d+s)}\zeta(2^jx)\|_{L^1}\sum_{|k-j|\le4}\|\nabla S_{k-1}f\|_{L^\infty}\|\nabla \Delta_k g\|_{L^p}\\
\le& C2^{j(s-1)}\|\nabla f\|_{L^\infty}\sum_{|k-j|\le4}\|\nabla \Delta_k g\|_{L^p}\\
\le& C2^{-j\sigma}\|\nabla f\|_{L^\infty}\sum_{|k-j|\le4}2^{(j-k)(s+\sigma-1)}2^{k(s+\sigma-1)}\|\nabla \Delta_k g\|_{L^p}\\
=& C2^{-j\sigma}\|\nabla f\|_{L^\infty}\| g\|_{\dot{B}_{p,r}^{\sigma+s}}\sum_{|k-j|\le4}2^{(j-k)(s+\sigma-1)}c_k\\
= & Cc_j2^{-j\sigma}\|\nabla f\|_{L^\infty}\| g\|_{\dot{B}_{p,r}^{\sigma+s}}.
\end{aligned}
\end{equation*}
For the terms $K_{22}$ and $K_{23}$, with a similar procedure as the estimate of  $K_{12}$ and $K_{13}$, respectively, we have
\begin{equation*}
\begin{aligned}
K_{22}\le& C2^{js}\sum_{|k-j|\le4}\|\nabla S_{k-1} g\|_{L^\infty}\|\Delta_k f\|_{L^p}\\
\le& C2^{js}\|\nabla  g\|_{L^\infty}\sum_{|k-j|\le4}\|\Delta_k f\|_{L^p}\\
\le& C2^{-j\sigma}\|\nabla  g\|_{L^\infty}\sum_{|k-j|\le4}2^{(j-k)(s+\sigma)}2^{k(s+\sigma)}\|\Delta_k f\|_{L^p}\\
\le& C2^{-j\sigma}\|\nabla  g\|_{L^\infty}\| f\|_{\dot{B}_{p,r}^{\sigma+s}}\sum_{|k-j|\le4}2^{(j-k)(s+\sigma)}c_k\\
=& Cc_j2^{-j\sigma}\|\nabla  g\|_{L^\infty}\| f\|_{\dot{B}_{p,r}^{\sigma+s}},
\end{aligned}
\end{equation*}
and
\begin{equation*}
\begin{aligned}
K_{23}\le& C2^{j(1+s)}\|\Delta_j g\|_{L^p}\sum_{k\ge j-2}\|\Delta_k f\|_{L^\infty}\\
\le& C2^{j(1+s)}\|\Delta_j g\|_{L^p}\sum_{k\ge j-2}2^{-k}\|\nabla\Delta_k f\|_{L^\infty}\\
\le& C2^{js}\|\Delta_j g\|_{L^p}\|\nabla f\|_{L^\infty}\\
\le& C2^{-j\sigma}2^{j(s+\sigma)}\|\Delta_j g\|_{L^p}\|\nabla f\|_{L^\infty}\\
=& Cc_j2^{-j\sigma}\|\nabla f\|_{L^\infty}\| g\|_{\dot{B}_{p,r}^{\sigma+s}}.
\end{aligned}
\end{equation*}
Using ${\rm  div} f=0$, Bernstein's inequality, H\"{o}lder's inequality and Young's inequality for series,
\begin{equation*}
\begin{aligned}
K_{24}\le& C2^{j(s+1)}\sum_{k\ge j-3}\|\Delta_k f\|_{L^\infty}\|\tilde{\Delta}_k g\|_{L^p}\\
\le& C2^{j(s+1)}\sum_{k\ge j-3}2^{-k}\|\nabla\Delta_k f\|_{L^\infty}\|\tilde{\Delta}_k g\|_{L^p}\\
\le& C2^{j(s+1)}\|\nabla f\|_{L^\infty}\sum_{k\ge j-3}2^{-k}\|\tilde{\Delta}_k g\|_{L^p}\\
\le& C2^{-j\sigma}\|\nabla f\|_{L^\infty}\sum_{k\ge j-3}2^{(j-k)(s+1+\sigma)}2^{k(s+\sigma)}\|\tilde{\Delta}_k g\|_{L^p}\\
=& C2^{-j\sigma}\|\nabla f\|_{L^\infty}\| g\|_{\dot{B}_{p,r}^{\sigma+s}}\sum_{k\ge j-3}2^{(j-k)(s+1+\sigma)}c_k\ \ (s+\sigma>-1)\\
=& Cc_j2^{-j\sigma}\|\nabla f\|_{L^\infty}\| g\|_{\dot{B}_{p,r}^{\sigma+s}}.
\end{aligned}
\end{equation*}
It is easy to deduce that
$$K_2\le \sum_{1\le i\le 4}(\sum_{j\in\Z}2^{j\sigma r}K_{2i}^r)^\frac{1}{r}\le C\left\{\|\nabla f\|_{L^\infty}\| g\|_{\dot{B}_{p,r}^{\sigma+s}}+\|\nabla g\|_{L^\infty}\| f\|_{\dot{B}_{p,r}^{\sigma+s}}\right\}.$$
Combining with the estimates of $K_1$ and $K_2$  leads the desired estimate (\ref{CE}).
\end{proof}

\vskip .3in
\section{Proof of Theorem \ref{main}}
\label{sec:mainproof}
In this section, we will prove Theorem \ref{main} by splitting the details into two steps. In step 1, we show the  local well-posedness for (\ref{TCM}) in brief. More precisely, we only give local a priori bound since other details can be proved by standard method, see Chapter 3 \cite{MB}. In step 2, we find the hidden thermal diffusion by  exploiting the structure as we described in section \ref{Introduction} and then obtain the global bound with small data by  using the commutator estimate (\ref{CE}) in section \ref{sec:Preliminaries}.
\vskip .1in
Now, we begin the proof.\\
{\bf Step 1. Local a priori bound.} Thanks to the cancelation property,
$$(\nabla \theta|v)+({\rm div}v|\theta)=0,$$
it is easy to get the $L^2$ bound of $(u,v,\theta)$:
\begin{equation}\label{3.1}
\frac{1}{2}\frac{d}{dt}\|(u,v,\theta)\|_{L^2}^2+\alpha\|(u,v)\|_{L^2}^2+\eta\|\nabla v\|_{L^2}^2=0.
\end{equation}
By the standard energy estimate, and noting
$$(\nabla \theta|v)_{\dot{H}^s}+({\rm div}v|\theta)_{\dot{H}^s}=0,$$
with (\ref{3.1}), we deduce that
\begin{equation}\label{3.2}
\begin{aligned}
\frac{1}{2}\frac{d}{dt}&\|(u,v,\theta)\|_{H^s}^2+\alpha\|(u,v)\|_{H^s}^2+\eta\|\nabla v\|_{H^s}^2\\
=&-(u\cdot\nabla u|u)_{\dot{H}^s}-({\rm div}(v\otimes v)|u)_{\dot{H}^s}-(u\cdot\nabla v|v)_{\dot{H}^s}\\
&-(v\cdot\nabla u|v)-(u\cdot\nabla \theta|\theta)_{\dot{H}^s}\\
:\stackrel{def}{=}&\sum_{i=1}^5I_i.
\end{aligned}
\end{equation}
By integrating by parts, we get
$$(u\cdot\nabla \Lambda^s u|\Lambda^s u)=(u\cdot\nabla\Lambda^s v|\Lambda^s v)=(u\cdot\nabla\Lambda^s \theta|\Lambda^s \theta)=0$$
and
$$(v\cdot\nabla\Lambda^s u|\Lambda^s v)+(v\cdot\nabla\Lambda^s v|\Lambda^s u)=-({\rm div}v|\Lambda^s u\cdot\Lambda^s v),$$
from which, with the equality ${\rm div}(v\otimes v)=v\ {\rm div}v+v\cdot\nabla v$, we obtain
\begin{equation*}
\begin{aligned}
I_1=&([\Lambda^s,u\cdot\nabla]u|\Lambda^s u),\ \ I_3=([\Lambda^s,u\cdot\nabla]v|\Lambda^s v),\ \ I_5=([\Lambda^s,u\cdot\nabla]\theta|\Lambda^s\theta),\\
I_2+I_4=&({\rm div}v|v\cdot u)_{\dot{H}^s}+(v\cdot\nabla v|u)_{\dot{H}^s}+(v\cdot\nabla u|v)_{\dot{H}^s}\\
=& (v\ {\rm div}v| u)_{\dot{H}^s}+([\Lambda^s,v\cdot\nabla]v|\Lambda^s u)+([\Lambda^s,v\cdot\nabla]u|\Lambda^s u)
-({\rm div}v|\Lambda^s u\cdot\Lambda^s v ).\\
\end{aligned}
\end{equation*}
%%%%%%%%%%%%%%%%%%%%%%%%%%%%%%%%%%%%%%%%%%%%%%%%%%%%%%%%%%%%%检查到了这里
Using H\"{o}lder's inequality, (\ref{kp}), (\ref{kpv}) and Young's inequality follows that
\begin{equation*}
\begin{aligned}
I_1\le& C\|\nabla u\|_{L^\infty}\|\Lambda^s u\|_{L^2}^2,\\
I_2+I_4\le& C(\|{\rm div}v\|_{L^\infty}\|v\|_{\dot{H}^s}+\|v\|_{L^\infty}\|{\rm div}v\|_{\dot{H}^s})\|u\|_{\dot{H}^s}\\
&+C(\|\nabla u\|_{L^\infty}+\|\nabla v\|_{L^\infty})(\|u\|_{\dot{H}^s}^2+\|v\|_{\dot{H}^s}^2)\\
&+C\|{\rm div}v\|_{L^\infty}\|u\|_{\dot{H}^s}\|v\|_{\dot{H}^s}\\
\le& C(\|\nabla u\|_{L^\infty}+\|\nabla v\|_{L^\infty})(\|u\|_{\dot{H}^s}^2+\|v\|_{\dot{H}^s}^2)\\
&+C\eta\|v\|_{L^\infty}^2\|u\|_{\dot{H}^s}^2+\frac{\eta}{2}\|\nabla v\|_{\dot{H}^s}^2,\\
I_3\le& C(\|\nabla u\|_{L^\infty}+\|\nabla v\|_{L^\infty})(\|u\|_{\dot{H}^s}^2+\|v\|_{\dot{H}^s}^2),\\
I_5\le& C(\|\nabla u\|_{L^\infty}+\|\nabla \theta\|_{L^\infty})(\|u\|_{\dot{H}^s}^2+\|\theta\|_{\dot{H}^s}^2).
\end{aligned}
\end{equation*}
Combining with the above estimates in (\ref{3.2}), thanks to
$$\|f\|_{\dot{H}^s}\le C\|\nabla f\|_{H^{s-1}},\ \ \|\nabla f\|_{L^\infty}\le C\|\nabla f\|_{H^{s-1}},\ \ s>2,$$
we have
\begin{equation}\label{original}
\begin{aligned}
\frac{1}{2}\frac{d}{dt}&\|(u,v,\theta)\|_{H^s}^2+\alpha\|(u,v)\|_{H^s}^2+\frac{\eta}{2}\|\nabla v\|_{H^s}^2\\
&\le C(\|\nabla u\|_{H^{s-1}}^2+\|\nabla v\|_{H^{s-1}}^2+\|\nabla \theta\|_{H^{s-1}}^2)^\frac{3}{2}+C\eta\|v\|_{H^s}^2\|u\|_{H^s}^2\\
&\le C(\eta)\left(\|(u,v,\theta)\|_{H^s}^3+\|(u,v)\|_{H^s}^4\right),
\end{aligned}
\end{equation}
which implies that there exists a $T_0=T_0(\eta,\|(u_0,v_0,\theta_0)\|_{H^s})>0$ such that $\forall\ t\in(0,T_0],$
$$\|u(t)\|_{H^s}^2+\|v(t)\|_{H^s}^2+\|\theta(t)\|_{H^s}^2\le C(\eta,\ T_0,\|(u_0,v_0,\theta_0)\|_{H^s})$$
and then
$$\alpha\int_{0}^{t}\|(u,v)(\tau)\|_{H^s}^2d\tau+\frac{\eta}{2}\int_{0}^{t}\|\nabla v(\tau)\|_{H^s}^2d\tau\le C(\eta,\ T_0,\|(u_0,v_0,\theta_0)\|_{H^s}).$$
So we can get the local a priori bound and then obtain the local well-posedness of (\ref{TCM}) by standard method.

{\bf Step 2. Global well-posedness.} Thanks to step 1, it suffices to give the global a priori bound. Denote
$$\mathcal{R}:\stackrel{def}{=}\Lambda^{-1}{\rm div},\ \ \Omega: \stackrel{def}{=}\mathcal{R}v+\eta\Lambda \theta,$$
and we will use the $L^2$ bound of Riesz transform $\mathcal{R}$ in some places of this step.\\
Applying the operator $\mathcal{R}$ and $\Lambda$ to the second equation and third equation of (\ref{TCM}), respectively, we get
\begin{equation}\label{3.3}
\partial_{t}\mathcal{R}v+u\cdot\nabla \mathcal{R}v+\alpha\mathcal{R}v+\eta \Lambda {\rm div}v+\Lambda \theta=-[\mathcal{R},u\cdot\nabla]v-\mathcal{R}(v\cdot\nabla u)
\end{equation}
\begin{equation}\label{3.4}
\partial_t \Lambda \theta+u\cdot\nabla \Lambda\theta-\Lambda{\rm div}v=-[\Lambda,u\cdot\nabla]\theta.
\end{equation}
Multiplying (\ref{3.4}) by $\eta$ and adding the resulting equation to (\ref{3.3}) lead
\begin{equation}\label{3.5}
\partial_t \Omega+u\cdot\nabla \Omega+\frac{1}{\eta}\Omega=(\frac{1}{\eta}-\alpha)\mathcal{R}v-[\mathcal{R},u\cdot\nabla]v-\mathcal{R}(v\cdot\nabla u)-\eta[\Lambda,u\cdot\nabla]\theta.
\end{equation}
Multiplying (\ref{3.5}) by $\Omega$, integrating in $\R^2$ and using
$$(u\cdot\nabla \Omega|\Omega)=0$$
 follows
\begin{equation}\label{3.6}
\begin{aligned}
\frac{1}{2}\frac{d}{dt}\|\Omega\|_{L^2}^2&+\frac{1}{\eta}\|\Omega\|_{L^2}^2=(\frac{1}{\eta}-\alpha)(\mathcal{R}v|\Omega)\\
&-([\mathcal{R},u\cdot\nabla]v|\Omega)-(\mathcal{R}(v\cdot\nabla u)|\Omega)-\eta([\Lambda,u\cdot\nabla]\theta|\Omega)\\
:\stackrel{def}{=}&J_1+J_2+J_3+J_4.
\end{aligned}
\end{equation}
By H\"{o}lder's inequality and  Young's inequality,
\begin{equation*}
\begin{aligned}
|J_1|\le& |\frac{1}{\eta}-\alpha|\|v\|_{L^2}\|\Omega\|_{L^2}\le C\eta|\frac{1}{\eta}-\alpha|^2\|v\|_{L^2}^2+\frac{1}{8\eta}\|\Omega\|_{L^2}^2,\\
|J_2|\le& \|[\mathcal{R},u\cdot\nabla]v\|_{L^2}\|\Omega\|_{L^2}\le C\|u\|_{L^\infty}\|\nabla v\|_{L^2}\|\Omega\|_{L^2}\\
\le& C\eta\|u\|_{L^\infty}^2\|\nabla v\|_{L^2}^2+\frac{1}{8\eta}\|\Omega\|_{L^2}^2,\\
|J_3|\le& C\|\mathcal{R}(v\cdot\nabla u)\|_{L^2}\|\Omega\|_{L^2}\le C\eta\|v\|_{L^\infty}^2\|\nabla u\|_{L^2}^2+\frac{1}{8\eta}\|\Omega\|_{L^2}^2.
\end{aligned}
\end{equation*}
By (\ref{kpv}), using $\eta\Lambda \theta=\Omega-\mathcal{R}v$ and Young's inequality,
\begin{equation*}
\begin{aligned}
|J_4|\le& C\eta(\|\nabla u\|_{L^\infty}\|\Lambda \theta\|_{L^2}+\|\nabla \theta\|_{L^\infty}\|\Lambda u\|_{L^2})\|\Omega\|_{L^2}\\
\le& C\|\nabla u\|_{L^\infty}(\|\Omega\|_{L^2}+\|\mathcal{R}v\|_{L^2})\|\Omega\|_{L^2}
+C\eta\|\nabla \theta\|_{L^\infty}\|\Lambda u\|_{L^2}\|\Omega\|_{L^2}\\
\le& C\eta\|\nabla u\|_{L^\infty}^2(\|v\|_{L^2}^2+\|\Omega\|_{L^2}^2)+C\eta^3
\|\nabla \theta\|_{L^\infty}^2\|\Lambda u\|_{L^2}^2+\frac{1}{8\eta}\|\Omega\|_{L^2}^2.
\end{aligned}
\end{equation*}
Inserting the above estimates into (\ref{3.6}) and absorbing the four  $\frac{1}{8\eta}\|\Omega\|_{L^2}^2$ by the left hand side of the resulting
inequality, we have
\begin{equation}\label{3.7}
\begin{aligned}
\frac{1}{2}\frac{d}{dt}\|\Omega\|_{L^2}^2&+\frac{1}{2\eta}\|\Omega\|_{L^2}^2\le C\eta|\frac{1}{\eta}-\alpha|^2\|v\|_{L^2}^2\\
&+C(\eta+\eta^3)(\|(u,\nabla u,v)\|_{L^\infty}^2+\|\nabla \theta\|_{L^\infty}^2)(\|(u,v)\|_{H^1}^2+\|\Omega\|_{L^2}^2)\\
&\le C\eta|\frac{1}{\eta}-\alpha|^2\|v\|_{L^2}^2+C(\eta+\eta^3)\|(u,v,\theta)\|_{H^s}^2(\|(u,v)\|_{H^1}^2+\|\Omega\|_{L^2}^2).
\end{aligned}
\end{equation}
Next, we give the $\dot{H}^{s-1}$ bound of $\Omega$. Applying $\Lambda^{s-1}$ to the both sides of (\ref{3.5}), and taking the inner product with $\Lambda^{s-1}\Omega$, while thanks to
$$(u\cdot\nabla\Lambda^{s-1}\Omega|\Lambda^{s-1}\Omega)=0,$$
we have
\begin{equation}\label{3.8}
\begin{aligned}
\frac{1}{2}\frac{d}{dt}\|\Omega\|_{\dot{H}^{s-1}}^2&+\frac{1}{\eta}\|\Omega\|_{\dot{H}^{s-1}}^2=(\frac{1}{\eta}-\alpha)(\mathcal{R}v|\Omega)_{\dot{H}^{s-1}}
-([\Lambda^{s-1},u\cdot\nabla]\Omega|\Lambda^{s-1}\Omega)\\
&-([\mathcal{R},u\cdot\nabla]v|\Omega)_{\dot{H}^{s-1}}-(\mathcal{R}(v\cdot\nabla u)|\Omega)_{\dot{H}^{s-1}}
-\eta([\Lambda,u\cdot\nabla]\theta|\Omega)_{\dot{H}^{s-1}}\\
:\stackrel{def}{=}&L_1+L_2+L_3+L_4+L_5.
\end{aligned}
\end{equation}
By H\"{o}lder's inequality and Young's inequality,
$$|L_1|\le C|\frac{1}{\eta}-\alpha|\|\Lambda^{s-1} v\|_{L^2}\|\Omega\|_{\dot{H}^{s-1}}\le C\eta|\frac{1}{\eta}-\alpha|^2\|v\|_{\dot{H}^{s-1}}^2+\frac{1}{8\eta}\|\Omega\|_{\dot{H}^{s-1}}^2.$$
By H\"{o}lder's inequality, (\ref{kpv}),  Young's inequality and using $\eta\Lambda \theta=\Omega-\mathcal{R}v$ ,
\begin{equation*}
\begin{aligned}
|L_2|\le& \|[\Lambda^{s-1},u\cdot\nabla]\theta\|_{L^2}\|\Lambda^{s-1} \Omega\|_{L^2}\\
\le& C(\|\nabla u\|_{L^\infty}\|\Lambda^{s-1}\theta\|_{L^2}+\|\nabla \theta\|_{L^\infty}\|\Lambda^{s-1}u\|_{L^2})\|\Lambda^{s-1}\Omega\|_{L^2}\\
\le& C\eta (\|\nabla u\|_{L^\infty}^2\|\Lambda^{s-1}\theta\|_{L^2}^2+\|\nabla \theta\|_{L^\infty}^2\|\Lambda^{s-1}u\|_{L^2}^2)+\frac{1}{8\eta}\|\Omega\|_{\dot{H}^{s-1}}^2\\
\le& C\eta (\|\nabla u\|_{L^\infty}^2\|\Lambda \theta\|_{\dot{H}^{s-2}}^2+\|\nabla \theta\|_{L^\infty}^2\|\Lambda^{s-1}u\|_{L^2}^2)+\frac{1}{8\eta}\|\Omega\|_{\dot{H}^{s-1}}^2\\
\le& \frac{C}{\eta}\|\nabla u\|_{L^\infty}^2(\|\Omega\|_{\dot{H}^{s-2}}^2+\|v\|_{\dot{H}^{s-2}}^2)
+C\eta\|\nabla \theta\|_{L^\infty}^2\|\Lambda^{s-1}u\|_{L^2}^2+\frac{1}{8\eta}\|\Omega\|_{\dot{H}^{s-1}}^2.
\end{aligned}
\end{equation*}
By  H\"{o}lder's inequality, (\ref{kp}) and Young's inequality, $\forall\ \iota\in (0,s-2)$,
\begin{equation*}
\begin{aligned}
|L_3|\le& \|\mathcal{R}(u\cdot\nabla v)-u\cdot\nabla\mathcal{R}v\|_{\dot{H}^{s-1}}\|\Omega\|_{\dot{H}^{s-1}}\\
\le& C\left\{\|u\|_{L^\infty}\|v\|_{\dot{H}^s}+(\|\nabla v\|_{L^\infty}+\|\nabla \mathcal{R}v\|_{L^\infty})\|u\|_{\dot{H}^{s-1}}\right\}\|\Omega\|_{\dot{H}^{s-1}}\\
\le& C(\|u\|_{L^\infty}\|v\|_{\dot{H}^s}+\|\nabla v\|_{H^{1+\iota}}\|u\|_{\dot{H}^{s-1}})\|\Omega\|_{\dot{H}^{s-1}}\ (\iota\in (0,s-2))\\
\le& C\eta(\|u\|_{L^\infty}^2\|v\|_{\dot{H}^s}^2+\|\nabla v\|_{H^{1+\iota}}^2\|u\|_{\dot{H}^{s-1}}^2)+\frac{1}{8\eta}\|\Omega\|_{\dot{H}^{s-1}}^2.
\end{aligned}
\end{equation*}
Similarly,
\begin{equation*}
\begin{aligned}
|L_4|\le& C\|v\cdot\nabla u\|_{\dot{H}^{s-1}}\|\Omega\|_{\dot{H}^{s-1}}\\
\le& C(\|v\|_{L^\infty}\|\nabla u\|_{\dot{H}^{s-1}}+\|\nabla u\|_{L^\infty}\|v\|_{\dot{H}^{s-1}})\|\Omega\|_{\dot{H}^{s-1}}\\
\le& C\eta(\|v\|_{L^\infty}^2\| u\|_{\dot{H}^{s}}^2+\|\nabla u\|_{L^\infty}\|v\|_{\dot{H}^{s-1}}^2)+\frac{1}{8\eta}\|\Omega\|_{\dot{H}^{s-1}}^2.
\end{aligned}
\end{equation*}
By  H\"{o}lder's inequality and  thanks to the commutator estimates (\ref{CE}) with $d=2$, $s=1$, $\sigma=s-1$, $p=r=2$,
\begin{equation*}
\begin{aligned}
|L_5|\le& \eta(\|\nabla \theta\|_{L^\infty}\|u\|_{\dot{H}^s}+\|\nabla u\|_{L^\infty}\|\theta\|_{\dot{H}^s})\|\Omega\|_{\dot{H}^s}\\
\le& C\eta^3(\|\nabla \theta\|_{L^\infty}^2\|u\|_{\dot{H}^s}^2+\|\nabla u\|_{L^\infty}^2\|\theta\|_{\dot{H}^s}^2)+\frac{1}{8\eta}\|\Omega\|_{\dot{H}^s}^2\\
\le& C\eta^3\left\{\|\nabla \theta\|_{L^\infty}^2\|u\|_{\dot{H}^s}^2+\frac{1}{\eta^2}\|\nabla u\|_{L^\infty}^2(\|\Omega\|_{\dot{H}^{s-1}}^2+\|v\|_{\dot{H}^{s-1}}^2)\right\}+\frac{1}{8\eta}\|\Omega\|_{\dot{H}^s}^2\\
\le& C\eta^3\|\nabla \theta\|_{L^\infty}^2\|u\|_{\dot{H}^s}^2+C\eta\|\nabla u\|_{L^\infty}^2(\|\Omega\|_{\dot{H}^{s-1}}^2+\|v\|_{\dot{H}^{s-1}}^2)
+\frac{1}{8\eta}\|\Omega\|_{\dot{H}^s}^2.
\end{aligned}
\end{equation*}
Combining the estimates of $L_l$ $(l=1,2,3,4,5)$ in (\ref{3.8}), it is easy to deduce that
\begin{equation*}
\begin{aligned}
\frac{1}{2}\frac{d}{dt}&\|\Omega\|_{\dot{H}^{s-1}}^2+\frac{1}{2\eta}\|\Omega\|_{\dot{H}^{s-1}}^2\le C\eta|\frac{1}{\eta}-\alpha|^2\|v\|_{\dot{H}^{s-1}}^2
+C(\frac{1}{\eta}+\eta+\eta^3)\\
\times&\left(\|(u,v,\nabla u,\nabla v)\|_{L^\infty}^2+\|\nabla \theta\|_{L^\infty}^2+\|\nabla v\|_{H^{1+\iota}}^2\right)
\left(\|(u,v)\|_{H^s}^2+\|\Omega\|_{\dot{H}^{s-1}}^2\right),
\end{aligned}
\end{equation*}
and thanks to (\ref{3.7}), we obtain
\begin{equation}\label{3.10}
\begin{aligned}
\frac{1}{2}\frac{d}{dt}\|\Omega\|_{H^{s-1}}^2&+\frac{1}{2\eta}\|\Omega\|_{H^{s-1}}^2\le C\eta|\frac{1}{\eta}-\alpha|^2\|v\|_{H^{s-1}}^2
+C(\frac{1}{\eta}+\eta+\eta^3)\\
\times&\|(u,v,\theta)\|_{H^s}^2
\left(\|(u,v)\|_{H^s}^2+\|\Omega\|_{H^{s-1}}^2\right).
\end{aligned}
\end{equation}
Using $\eta\Lambda \theta=\Omega-\mathcal{R}v$ again, we can rewrite  (\ref{original}) as follows:
\begin{equation}\label{3.11}
\begin{aligned}
\frac{1}{2}\frac{d}{dt}&\|(u,v,\theta)\|_{H^s}^2+\alpha\|(u,v)\|_{H^s}^2+\frac{\eta}{2}\|\nabla v\|_{H^s}^2
\le C\|(\nabla u,\nabla v,\nabla \theta)\|_{H^{s-1}}\\
&\ \ \times\left(\|\nabla u\|_{H^{s-1}}^2+\|\nabla v\|_{H^{s-1}}^2+\|\Lambda \theta\|_{H^{s-1}}^2\right)+C\eta\|v\|_{H^s}^2\|u\|_{H^s}^2\\
&\le C(1+\frac{1}{\eta^2})\|(u,v,\theta)\|_{H^s}\|(\nabla u,\nabla v,v,\Omega)\|_{H^{s-1}}^2+C\eta\|v\|_{H^s}^2\|u\|_{H^s}^2.\\
\end{aligned}
\end{equation}
Multiplying (\ref{3.11}) by
$$M:\stackrel{def}{=}\frac{2C\eta|\frac{1}{\eta}-\alpha|^2}{\alpha},$$
adding the resulting inequality to (\ref{3.10}), and then absorbing the term $C\eta|\frac{1}{\eta}-\alpha|^2\|v\|_{H^{s-1}}^2$,
we obtain
\begin{equation*}
\begin{aligned}
\frac{1}{2}\frac{d}{dt}&\left\{M\|(u,v,\theta)\|_{H^s}^2+\|\Omega\|_{H^{s-1}}^2\right\}+\frac{M\alpha}{2}
\|(u,v)\|_{H^s}^2+\frac{M\eta}{2}\|\nabla v\|_{H^s}^2+\frac{1}{2\eta}\|\Omega\|_{H^{s-1}}^2\\
\le& C(M,\eta)\left(\|(u,v,\theta)\|_{H^s}^2+\|(u,v,\theta)\|_{H^s}\right)\left(\|(u,v)\|_{H^s}^2+\|\Omega\|_{H^{s-1}}^2\right).
\end{aligned}
\end{equation*}
Denote
$$m:\stackrel{def}{=}\min\left\{\frac{M\alpha}{2},\ \frac{\eta M}{2},\ \frac{1}{2\eta}\right\},$$
by Young's inequality, it is easy to get
\begin{equation}\label{3.12}
\begin{aligned}
\frac{d}{dt}&\left\{M\|(u,v,\theta)\|_{H^s}^2+\|\Omega\|_{H^{s-1}}^2\right\}+2m(
\|(u,\nabla v,v)\|_{H^s}^2+\|\Omega\|_{H^{s-1}}^2)\\
\le& C(M,\eta)\left(\|(u,v,\theta)\|_{H^s}^2+\|(u,v,\theta)\|_{H^s}\right)\left(\|(u,v)\|_{H^s}^2+\|\Omega\|_{H^{s-1}}^2\right)\\
\le& C(M,m,\eta)\|(u,v,\theta)\|_{H^s}^2\left(\|(u,v)\|_{H^s}^2+\|\Omega\|_{H^{s-1}}^2\right)+m(
\|(u,v)\|_{H^s}^2+\|\Omega\|_{H^{s-1}}^2).
\end{aligned}
\end{equation}
Absorbing the second term on the  right hand side in the last inequality of (\ref{3.12}), then integrating in time yields  $\forall\ t>0,$\\
\begin{equation}\label{3.13}
\begin{aligned}
M\|(u,v,\theta)(t)\|_{H^s}^2&+\|\Omega(t)\|_{H^{s-1}}^2+m\int_{0}^{t}\|(u,\nabla v,v)(\tau)\|_{H^s}^2+\|\Omega(\tau)\|_{H^{s-1}}^2 d\tau\\
\le& C(M,m,\eta)\int_{0}^{t}\|(u,v,\theta)(\tau)\|_{H^s}^2\left(\|(u,v)(\tau)\|_{H^s}^2+\|\Omega(\tau)\|_{H^{s-1}}^2\right)d\tau\\
&+M\|(u_0,v_0,\theta_0)\|_{H^s}^2+\|\Omega_0\|_{H^{s-1}}^2,
\end{aligned}
\end{equation}
With small data (\ref{small}), choosing $\epsilon$ to be so small that
$$M\|(u_0,v_0,\theta_0)\|_{H^{s}}^2+\|\Omega_0\|_{H^{s-1}}^2\le \frac{Mm}{2C(M,m,\eta)}$$
which implies that $\|(u_0,v_0,\theta_0)\|_{H^s}^2< \frac{m}{2C(M,m,\eta)}$. Suppose there exists a first time $T^\star$ such that $\forall t\in(0,T^\star),$
$$M\|(u,v,\theta)(t)\|_{H^{s}}^2+\|\Omega(t)\|_{H^{s-1}}^2\le \frac{Mm}{2C(M,m,\eta)}$$
and
\begin{equation}\label{3.14}
\lim_{\epsilon_1\searrow0}M\|(u,v,\theta)(T^\star-\epsilon_1)\|_{H^{s}}^2+\|\Omega(T^\star-\epsilon_1)\|_{H^{s-1}}^2> \frac{Mm}{2C(M,m,\eta)}.
\end{equation}
However, from (\ref{3.13}), we can deduce
\begin{equation}\label{3.15}
\begin{aligned}
&\ \ M\|(u,v,\theta)(T^\star-\epsilon_1)\|_{H^s}^2+\|\Omega(T^\star-\epsilon_1)\|_{H^{s-1}}^2\\
&+\frac{m}{2}\int_{0}^{T^\star-\epsilon_1}\|(u,\nabla v,v)(\tau)\|_{H^s}^2+\|\Omega(\tau)\|_{H^{s-1}}^2 d\tau\\
\le& M\|(u_0,v_0,\theta_0)\|_{H^s}^2+\|\Omega_0\|_{H^{s-1}}^2<\frac{Mm}{2C(M,m,\eta)},
\end{aligned}
\end{equation}
which yields that
$$M\|(u,v,\theta)(T^\star-\epsilon_1)\|_{H^s}^2+\|\Omega(T^\star-\epsilon_1)\|_{H^{s-1}}^2<\frac{Mm}{2C(M,m,\eta)},$$
from which, and taking $\epsilon_1\searrow 0$, we get a contradiction with (\ref{3.14}). Therefore,
$$\lim_{\epsilon_1\searrow 0}M\|(u,v,\theta)(T^\star-\epsilon_1)\|_{H^s}^2+\|\Omega(T^\star-\epsilon_1)\|_{H^{s-1}}^2\le \frac{Mm}{2C(M,m,\eta)},
$$
which indicates under condition (\ref{small}), we have a global solution $(u,v,\theta)$ satisfying $\forall\ t>0$,
$$\|(u,v,\theta)(t)\|_{H^s}\le \frac{m}{2C(M,m,\eta)}=C(\alpha,\eta)<\infty,$$
and then using (\ref{3.13}), we can also obtain
$$\int_{0}^{t}\|(u,\nabla v,v)(\tau)\|_{H^s}^2 d\tau \le \frac{M}{2C(M,m,\eta)}=C(\alpha,\eta)<\infty.$$
This completes the proof of Theorem \ref{main}.

%\vskip .4in
%\section*{Acknowledgements}
%Chae was partially supported by NRF grant No.2006-0093854 and No.2009-0083521. Wu was partially supported by NSF grant DMS1209153 and the AT\&T Foundation at Oklahoma State University.

\vskip .4in

\end{document}